\renewcommand{\le}{\leqslant}
\renewcommand{\ge}{\geqslant}
\newcommand{\ptl}{\partial}
\newcommand{\rr}{{\mathbb{R}}}
\newcommand{\nn}{{\mathbb{N}}}
\newcommand{\la}{\lambda}
\newcommand{\eps}{\varepsilon}
\newcommand{\ga}{\gamma}
\newcommand{\Ga}{\Gamma}
\newcommand{\escpr}[1]{\langle#1\rangle}
\newcommand{\co}{\cos(\theta)}
\newcommand{\si}{\sin(\theta)}
\newcommand{\kk}{\kappa}
\newcommand{\mh}{\mathcal{H}}
\newtheorem{theorem}{Theorem}[section]
\newtheorem{proposition}[theorem]{Proposition}
\newtheorem{lemma}[theorem]{Lemma}
\theoremstyle{definition}
\newtheorem{remark}[theorem]{Remark}
\newtheorem{example}[theorem]{Example}
\newtheorem{definition}[theorem]{Definition} 
\numberwithin{equation}{section}
\theoremstyle{remark}
\newcommand{\wdw}{\wedge \cdots \wedge}
\begin{document}

\title[Higher dimensional holonomy map for ruled submanifolds]{Higher dimensional holonomy map for ruled submanifolds in graded manifolds}

\author[G.~Giovannardi]{Gianmarco Giovannardi}
\address{Dipartimento di Matematica, Piazza di Porta S. Donato 5, 401
26 Bologna, Italy}
\email{gianmarc.giovannard2@unibo.it}

\date{\today}

\thanks{The author has been supported by Horizon 2020 Project ref. 777822: GHAIA,  MEC-Feder grant MTM2017-84851-C2-1-P and PRIN 2015 ``Variational and perturbative aspects of nonlinear differential problems” }
\subjclass[2000]{58H99, 49Q99, 58A17}
\keywords{sub-Riemannian manifolds; graded manifolds; regular and singular ruled submanifolds; higher-dimensional holonomy map; admissible variations}

\begin{abstract}
The deformability condition for submanifolds of fixed degree 
immersed in a graded manifold can be expressed as a system of first order PDEs. 
In the particular but important case of ruled submanifolds, 
we introduce a natural choice of coordinates, which allows to deeply simplify the formal 
expression of the system, and to reduce it to a system of ODEs along a characteristic direction. 
We  introduce a notion of higher dimensional holonomy map in analogy with the one-dimensional case \cite{MR1189496}, and we provide a characterization for singularities  as well as a deformability criterion.
\end{abstract}

\maketitle

\thispagestyle{empty}

\bibliographystyle{abbrv} 

\tableofcontents
\section{Introduction}
The goal of this work is to study the deformability of a some particular kind of submanifolds immersed in an equiregular graded manifold $(N,\mh^1,\ldots,\mh^s)$, that is a smooth manifold endowed with a filtration of sub-bundles of the tangent bundle $\mh^1\subset\mh^2\subset\cdots\subset\mh^s=TN$ satisfying $[\mh^i,\mh^j]\subset \mh^{i+j}$, $i,j\ge 1$. 

Given $p\in N$, a vector $v\in T_pN$ has degree $i$ if $v\in  \mh_p^i $ but $v \notin \mh_p^{i-1}$.
When we consider an immersed submanifold $\Phi:\bar{M} \to N$ and we set $M=\Phi(\bar{M})$, the interaction between the tangent space $T_{\bar{p}} M=(d\Phi)_{\bar{p}}(T_{\bar{p}} \bar{M})$, where $(d\Phi)_{\bar{p}}$ denotes the differential of $\Phi$ at $\bar{p}$, and the filtration $\mh_p^1\subset \mh_p^2\subset\cdots\subset\mh_p^s$ is embodied by the induced tangent flag
\begin{equation}
\label{eq:intanflag}
 T_{\bar{p}} M \cap \mh_p^1 \subset \cdots \subset T_{\bar{p}} M \cap \mh_p^s,
\end{equation}
where $p=\Phi(\bar{p})$, $\bar{p} \in \bar{M}$. The smooth submanifold $M$ equipped with the induced filtration  pointwise described by \eqref{eq:intanflag} inherits a graded structure, that is no more equiregular. M. Gromov in \cite{Gromov} consider the  homogeneous dimension of the tangent flag \eqref{eq:intanflag} to define the pointwise degree by 
\[
\deg_M(\bar{p})= \sum_{j=1}^s j (\tilde{m}_j- \tilde{m}_{j-1}),
\]
where  $\tilde{m}_{0}=0$ and $\tilde{m}_j=\text{dim}(T_{\bar{p}}M \cap \mathcal{H}_p^j )$. In an alternative definition provided in \cite{vittonemagnani},  the authors write the $m$-tangent vector to $M= \Phi(\bar{M})$ as linear combination of simple $m$-vectors $X_{j_1} \wdw X_{j_m}$ where $(X_1,\ldots,X_n)$ is an adapted basis of $TN$, see \cite{Bellaiche} or \eqref{local adapted basis to the bundle}. Then the pointwise degree is the maximum of the degree of the simple $m$-vectors whose degree is in turn given by the sum of the degrees of the single vectors appearing in the wedge product.  The degree $\deg(M)$ of a submanifold $M$  is the maximum of the pointwise degree among all points in $\bar{M}$. 

In \cite{vittonemagnani} V. Magnani and D. Vittone introduced a notion of area for submanifolds immersed in Carnot groups 
that later was  generalized by  \cite{2019arXiv190505131C} for immersed submanifolds in graded structures.
Given a Riemannian metric $g$ in the ambient space $N$, the area functional $A_d(M)$ in  \cite{2019arXiv190505131C} is obtained by a limit process involving the Riemannian areas of $M$ associated to a sequence of dilated metrics $g_r$ of the original one $g$. The density of this area is given by the projection of the $m$-vector  $e_1\wedge \ldots \wedge e_m$ tangent to $M$ onto the space of $m$-vectors of degree equal to $d=\deg(M)$, see equation \eqref{eq:projected_integral_formula_Ad}. The central issue is that the area functional depends on the degree $\deg(M)$ of the immersed submanifold $M$. Thus, if we wish to compute the first variation formula for this area functional we need to deform the original submanifold by variations $\Ga(\bar{p},\tau)$ that preserve the original degree $\deg(M)$. This constraint on the degree gives rise to a first order system of PDEs  that defines the \emph{admissibility} for vector fields on $M$.

The simplest example of immersion is given by a curve $\ga:I \subset \rr \to N$, with $\ga'(t)\neq0$ at every $t\in I$. The pointwise degree of  $\ga(I)$ at $\ga(t)$ is the degree of its tangent vector $\ga'(t)$ at every $t\in I$. In this particular case the admissibility system is a system of ODEs along the curve $\ga$.  This restriction on vector fields produces the phenomenon of \emph{singular curves}, that do not admit enough compactly supported variations in the sub-bundles  determined by the original degree of $\ga$. This issue has been addressed by L. Hsu in \cite{MR1189496} and R. Bryant and L. Hsu in \cite{MR1240644}.
These two works are based on the Griffiths formalism \cite{MR684663} that studies variational problems using the geometric theory of exterior differential system \cite{MR924805,MR1083148} and the method of moving frames developed by E. Cartan \cite{MR1190006}.
In Carnot manifolds $(N,\mh)$, that are a particular case of graded manifolds where the flag of sub-bundles is produced by a bracket generating distribution $\mh$, the usual approach to face this problem is by means of the critical points of the endpoint map \cite{Montgomery}. The presence of singular curves is strongly connected with the existence of \emph{abnormal geodesics}, firstly established by R. Montgomery in \cite{Mont94a,Mont94b}. In the literature many papers concerning this topic have been published, just to name a few we cite \cite{AgrachevSachkov, AgrachevBarilariBoscain, LeonardiMonti, Extremalcurves, Monti, AgracevSarychev,Rifford}.  The paper \cite{LeonardiMonti} by E. Le Donne, G.P. Leonardi, R. Monti and D. Vittone is specially remarkable because of the new algebraic characterization of abnormal sub-Riemannian extremals in stratified nilpotent Lie groups.

 More precisely,  L. Hsu \cite{MR1189496} defines the singular curves as the ones along which the \emph{holonomy map} fails to be surjective. This holonomy map studies the controllability along the curve restricted to $[a,b]\subset I$ of a system of ODEs embodying the constraint on sub-bundles determined by the degree. In \cite[Section 5]{2019arXiv190204015C} the authors revisited this construction and  defined an \emph{admissible}  vector field as a solution of this system. A powerful characterization of singular curves in terms of solutions of ODEs is given by \cite[Theorem 6]{MR1189496}. On the other hand, when a curve $\ga$ is regular restricted to $[a,b]$, \cite[Theorem 3]{MR1189496} ensures that for any compactly supported admissible vector field $V$ on $[a,b]$ there exists a variation, preserving the original degree of $\ga$, whose variational vector field is $V$. Then, only for regular curves this deformability theorem allows us to compute the first variation formula for the length functional deducing the geodesic equations (\cite[Section 7]{2019arXiv190204015C}), whereas for singular curves the situation is more complicated.
 
The deformability problem of a higher dimensional  immersion $\Phi:\bar{M} \to N$ has been first studied in \cite{2019arXiv190505131C}. The admissibility system of  first order linear PDEs expressing this condition in coordinates  is not easy to study. 
Nonetheless, \cite[Proposition 5.5]{2019arXiv190505131C} shows that only the transversal part $V^{\perp}$ of the vector field $V=V^{\top}+V^{\perp}$ affects the admissibility system. Therefore, in the present work we  consider an adapted tangent basis $E_1,\ldots,E_m$ for the flag \eqref{eq:intanflag} and then we add transversal vector fields $V_{m+1},\ldots,V_n$ of increasing degrees so that a sorting of $\{E_1,\ldots,E_m,V_{m+1},\ldots,V_n\}$ is a local adapted basis for $N$. Then we consider the metric $g$ that makes $E_1,\ldots,E_m,V_{m+1},\ldots,V_n$ an orthonormal basis. Hence we obtain that the admissibility system is equivalent to 
\begin{equation}
\label{eq:intoadmsystem}
E_j( f_{i})=-\sum_{r=m+k+1}^n b_{ i j r} \, f_r - \sum_{h=m+1}^{m+k} a_{ i j h } \, g_h ,
\end{equation}
for $i=m+k+1,\ldots,n$ and $\deg(V_i)>\deg(E_j)$. In equation \eqref{eq:intoadmsystem}  the integer $k$, defined in \eqref{eq:k}, separates the  horizontal control of the systems $ V_{\mh}= \sum_{h=m+1}^{m+k} g_{h} V_h$ from the vertical component $ V_{\mathcal{V}}= \sum_{r=m+k+1}^{n} f_r V_r$.

The presence of isolated submanifolds  and a mild deformability theorem under the strong regularity assumption are showed in \cite{2019arXiv190505131C}. However, the definition of \emph{singularity} for immersed submanifolds, analogous  to the one provided by \cite{MR1189496} in the case of curves, is missing.
Therefore the natural questions that arise are:
\begin{itemize}
\item is  it possible to define a generalization of the \emph{holonomy map} for submanifolds of dimension grater than one?
\item Under what condition does  the surjection of these holonomy map  still imply a deformability theorem in the style of \cite[Theorem 3]{MR1189496}?
\end{itemize}

In the present paper we answer the first question in the cases of ruled $m$-dimensional submanifolds whose $(m-1)$ tangent vector fields $E_2,\ldots,E_m$ have degree $s$ and the first vector field $E_1$ has degree equal to $\iota_0$, where $1\le \iota_0 \le s-1$. The resulting degree is $\deg(M)=(m-1)s+ \iota_0$. Therefore the ruled submanifold is foliated by curves of degree $\iota_0$ out of the characteristic set $\bar{M}_0$, whose points have degree strictly less than $\deg(M)$.
Then, under an exponential
change of coordinates $x=(x_1,\hat{x})$, the admissibility system \eqref{eq:intoadmsystem} becomes 
\begin{equation}
\label{eq:inadmsystem2}
\dfrac{\partial F(x)}{\partial x_1} =-B(x) F(x)-A(x) G(x),
\end{equation}
where $\partial_{x_1}$ is the partial derivative in the direction $E_1$, $G$ are the horizontal coordinates $V_{\mh}= \sum_{h=m+1}^{m+k} g_{h} V_h$, $F$ are the vertical components given by $V_{\mathcal{V}}= \sum_{r=m+k+1}^{n} f_r V_r$ and $A, B$ are matrices defined at the end of Section~\ref{sc:ruledsub}. Therefore, this system of ODEs  is easy to solve in the direction $\partial_{x_1}$ perpendicular to the $(m-1)$ foliation generated by $E_2,\ldots,E_m$. We consider a bounded open set $\Sigma_0 \subset \{x_1=0\}$ in the foliation, then we build the $\eps$-cylinder $\Omega_{\eps}=\{ (x_1,\hat{x}) \ : \ \hat{x} \in \Sigma_0, 0< x_1 <\eps \}$ over $\Sigma_0$. We consider the horizontal controls $G$ in the space of continuous functions compactly supported in $\Omega_{\eps}$. For each fixed $G$ , $F$ is the solution of \eqref{eq:inadmsystem2} vanishing on $\Sigma_0$. Then we can define a higher dimensional \emph{holonomy map} $H_{M}^{\eps}$, whose image is the solution  $F$, evaluated on the top of the cylinder $\Omega_{\eps}$. We say that a ruled submanifold is \emph{regular} when by varying the controls $G$ the image of the holonomy map is a dense subspace, that contains a Schaulder basis  of the Banach space of  continuous vertical functions on $\Sigma_{\eps}$ vanishing at infinity. This Banach space is the closure with respect to the supremum norm of the space of compactly supported vertical functions on $\Sigma_{\eps}$. Namely an immersion is regular if we are able to generate all possible  continuous vertical functions vanishing at infinity on $\Sigma_{\eps}\subset \{x_1=\eps\}$ by letting vary the control $G$ in the space of  continuous horizontal functions vanishing at infinity inside the cylinder $\Omega_{\eps}$. The main difference with respect to the one dimensional case is that the target space of the holonomy map is now the Banach space  of  continuous vertical vector vanishing at infinity on the foliation, instead of the finite vertical space of vectors at the final point $\ga(b)$ of the curve. In Theorem~\ref{th:singchar} we provide a nice characterization of singular ruled submaifolds  in analogy with \cite[Theorem 6]{MR1189496}. 

For general submanifolds there are several obstacles to the construction of a satisfactory generalization of the holonomy map. The main difficulty is that we do not know how to verify a priori the compatibility conditions \cite[Eq. (1.4), Chapter VI]{Hartman}, that are necessary and sufficient conditions  for the uniqueness and the existence of a solution of  the admissibility system \eqref{eq:intoadmsystem} (see \cite[Theorem 3.2, Chapter VI]{Hartman}). In Example~\ref{horizontal submanifolds} we show how we can deal with these compatibility conditions in the particular case of horizontal immersions in the Heisenberg group.

In order to give a positive answer to the second question, we need to consider two additional assumptions on the ruled submanifold: the first one $(i)$ is that the vector fields $E_2,\ldots,E_m$ of degree $s$ \emph{fill  the grading $\mh^1\subset \ldots \subset \mh^s$ from the top}, namely $\dim(\mh^s)- \dim(\mh^{s-1})=m-1$,  and the second one $(ii)$ is that the ruled immersion  foliated by curves of degree $\iota_0$ verifies the bound $s-3 \le \iota_0\le s-1$.  Under these hypotheses  the space of $m$-vector fields of degree grater than $\deg(M)$ is reasonably simple, thus  in Theorem \ref{thm:intcriterion}  we show that each admissible vector field on a regular immersed ruled submanifold is integrable in the spirit of \cite[Theorem 3]{MR1189496}. This result is sharper than the one obtained  for general submanifolds \cite[Theorem~7.3]{2019arXiv190505131C}, where the authors only provide variations of the original immersion compactly supported in an open neighborhood of the strongly regular point. Indeed, since  we solve a differential linear system of equations along the characteristics curves of degree $\iota_0$, we obtain a global result. On the other hand in \cite[Theorem~7.3]{2019arXiv190505131C} the  admissibility system is solved  algebraically assuming a pointwise full rank condition of the matrix $A(\bar{p})$.  To integrate the vector field $V(\bar{p})$ on $\Omega_{\eps}$ we follow the exponential map generating the non-admissible compactly supported variation $\Gamma_{\tau}(\bar{p})=\exp_{\Phi(\bar{p})}(\tau V(\bar{p}))$ of the initial immersion  $\Phi$, where   $\text{supp}(V)\subset \Omega_{\eps}$. By the Implicit Function Theorem  there exists a vector field $Y(\bar{p},\tau)$ on $\Omega_{\eps}$ vanishing on $\Sigma_0$ such that the perturbations  $\tilde{\Gamma}_{\tau}(\bar{p})=\exp_{\Phi(\bar{p})}(\tau V(\bar{p})+ Y(\tau, \bar{p}))$ of $\Gamma$ are immersions of the same degree of $\Phi$ for each $\tau$ small enough. In general $\tilde{\Gamma}$ does not move points on $\Sigma_0$ but changes the values of $\Phi$ on $\Sigma_{\eps}$. Finally, the regularity condition on $\Phi$ allows us to produce the admissible variation that fixes the values on $\Sigma_{\eps}$ and integrate $V$. On the other hand, when the bundle of $m$-vector fields of degree greater than $\deg(M)$ for a general ruled submanifold is larger than the  target space of the higher dimensional holonomy, we lose the surjection in Implicit Function Theorem that allows us to perturb the exponential map to integrate $V$. 

A direct consequence of this result is that the regular ruled immersions of degree $d$ that satisfy the assumption $(i)$ and $(ii)$ are accumulation points for the domain of degree $d$ area functional $A_d(\cdot)$. Therefore it makes sense to consider the first variation formula computed in \cite[Section 8]{2019arXiv190505131C}. An interesting strand of research is  deducing the mean curvature equations for the critical points of the area functional taking into account the restriction embodied by the holonomy map. Contrary to what can be expected, we exhibit in Example~\ref{ex:regularplaneinEngel} a plane foliated by abnormal geodesics of degree one  that is regular and is a critical point for the area functional (since its mean curvature equation vanishes). 

Furthermore these ruled surfaces appear in the study of  the geometrical structures of the visual brain, built by the connectivity between neural cells \cite{cittilibro}. A geometric characterization of the response of  the primary visual cortex in the presence of a visual stimulus from the retina was first described by the D. H. Hubel and T. Wiesel \cite{HW62}, that discovered that the cortical neurons are sensitive to different features such as orientation, curvature, velocity and scale. The so-called simple cells in particular are sensitive to orientation, thus G. Citti and A. Sarti in \cite{CS06} proposed a model where the original image on the retina is lifted to a 2 dimensional surface of maximum degree into the three-dimensional sub-Riemannian manifold  $SE(2)$, adding orientation. In \cite{CS15}  they shows how  minimal surfaces play an important role in the completion process of images. Adding curvature to the model,  a four dimensional Engel structure arises,  see \S~1.5.1.4 in \cite{Petitot2014} and \cite{DobbinsZucker}. When in Example~\ref{ex:visualc} we lift the previous $2D$ surfaces in this structure we obtain surfaces of codimension 2, but their degree is not maximum since we need to take into account the constraint that curvature is the derivative of orientation. Nevertheless these surfaces are ruled, regular and  verify the assumption $(i)$ and $(ii)$, therefore by Theorem~\ref{thm:intcriterion} they can be deformed. Hence, there exists a notion of mean curvature associated to  these ruled surfaces and we might ask if the completion process of images improved for $SE(2)$ based on minimal surfaces can be generalized to this framework. Moreover, if we lift the original retinal image to higher dimensional spaces adding variables that encode new possible features, as suggested in \cite{NCS18} following even  a non-differential approach based on metric spaces, we may ask if the lifted surfaces  are still ruled and regular.

The paper is organized as follows. In Section~\ref{sc:prel} we recall the definitions of graded manifolds, degree of a submanifold, admissible variations and admissible vector fields. In Section~\ref{sc:admsys} we deduce the admissibility system \eqref{eq:intoadmsystem}. In Section~\ref{sc:ruledsub} we provide the definition of ruled submanifolds. 
Section~\ref{sc:holonomy} is completely devoted to the description of the higher-dimensional holonomy map and characterization of regular and singular ruled submanifolds. Finally, in Section~\ref{sc:intheo} we give the proof of Theorem \ref{thm:intcriterion}.

\subsubsection*{Acknowledgement}
I warmly thank my Ph.D. supervisors Giovanna Citti and  Manuel Ritoré for their advice and for fruitful discussions that gave rise to the idea of higher dimensional holonomy map. I would also like to thank Noemi Montobbio for an interesting  conversation on proper subspaces of Banach spaces and the referee for her/his useful comments.
\section{Preliminaries}
\label{sc:prel}
Let $N$ be an $n$-dimensional smooth manifold. Given two smooth vector fields $X,Y$ on $N$, their \emph{commutator} or \emph{Lie bracket} is defined by $[X,Y]:=XY-YX$.  An \emph{increasing filtration} $(\mh^i)_{i\in \nn}$ of  the tangent bundle $TN$ is a flag of sub-bundles
\begin{equation}
\mathcal{H}^1\subset\mathcal{H}^2\subset\cdots\subset \mathcal{H}^i\subset\cdots\subseteq TN,
\label{manifold flag}
\end{equation}
such that
\begin{enumerate}
\label{def:incfilt}
\item[(i)] $ \cup_{i \in \nn} \mh^i= TN$ 
\item[(ii)]
 $ [\mathcal{H}^{i},\mathcal{H}^{j}] \subseteq \mathcal{H}^{i+j},$ for $ i,j \ge1$, 
\end{enumerate}
where  
$ [\mathcal{H}^i,\mathcal{H}^j]:=\{[X,Y]  : X \in \mathcal{H}^i,Y \in \mathcal{H}^j\}$.
Moreover, we say that an increasing filtration is \emph{locally finite} when
\begin{enumerate}
\item[(iii)]  for each $p \in N$ there exists an integer  $s=s(p)$ satisfying $\mathcal{H}^s_p=T_p N$. The \emph{step} at $p$ is the least integer $s$ that satisfies the previous property. Then  we have the following flag of subspaces
\begin{equation}
 \mathcal{H}^1_p\subset\mathcal{H}^2_p\subset\cdots\subset \mathcal{H}^s_p=T_p N.
 \label{flag at each point}
\end{equation}
\end{enumerate}

A \textit{graded manifold} $(N,(\mh^i))$ is a smooth manifold $N$ endowed
with a locally finite increasing filtration, namely  a flag of sub-bundles \eqref{manifold flag} satisfying (i),(ii) and (iii). For the sake of brevity a locally finite increasing filtration will be simply called a filtration.
Setting $n_i(p):=\dim {\mathcal{H}}^i_p $, the integer list $(n_1(p),\cdots,n_s(p))$  is called the \textit{growth vector} of the filtration \eqref{manifold flag} at $p$. When the growth vector is constant in a neighborhood of a point $p \in N$ we say that $p$ is a \textit{regular point} for the filtration. We say that a filtration $(\mathcal{H}^i)$ on a manifold $N$ is \textit{equiregular} if the growth vector is constant in $N$. From now on we suppose that $N$ is an equiregular graded manifold.

Given a vector $v$ in $T_p N$ we say that the \textit{degree} of $v$ is equal to $\ell$ if $v\in\mathcal{H}_p^\ell$ and $v \notin \mathcal{H}_p^{\ell-1}$. In this case we write $\text{deg}(v)=\ell$. The degree of a vector field is defined pointwise and can take different values at different points.

Let $(N,(\mh^1,\ldots, \mh^s))$ be an equiregular graded manifold. Take $p\in N$ and consider an open neighborhood $U$ of $p$  where a local frame $\{X_{1},\cdots,X_{n_1}\}$  generating  $\mathcal{H}^1$ is defined. Clearly the degree of $X_j$, for $j=1,\ldots,n_1$, is equal to one since the vector fields $X_1,\ldots,X_{n_1}$ belong to $\mh^1$. Moreover the vector fields $X_1, \ldots,X_{n_1}$ also lie in $\mh^2$, we add some vector fields $X_{n_{1}+1},\cdots,X_{n_2} \in \mathcal{H}^2\setminus \mathcal{H}^{1} $ so that $(X_1)_p,\ldots,(X_{n_2})_p$ generate $\mathcal{H}^2_p$. Reducing $U$ if necessary we have that $X_1,\ldots,X_{n_2}$ generate $\mathcal{H}^2$ in $U$.  Iterating this procedure we obtain a basis of $TM$ in a neighborhood of $p$
\begin{equation}
\label{local adapted basis to the bundle}
 (X_1,\ldots,X_{n_1},X_{n_1+1},\ldots,X_{n_2},\ldots,X_{n_{s-1}+1}, \ldots,X_n),
\end{equation}
such that the vector fields $X_{n_{i-1}+1},\ldots,X_{n_i}$ have degree equal to $i$, where $n_0:=0$. The basis obtained in (\ref{local adapted basis to the bundle}) is called an \textit{adapted basis} to the filtration $(\mh^1,\ldots,\mh^s)$.
Given an adapted basis $(X_i)_{1\le i\le n}$, the \emph{degree} of the \emph{simple} $m$-vector field $X_{j_1}\wedge\ldots\wedge X_{j_m}$ is defined by
\[
\deg(X_{j_1}\wedge\ldots\wedge X_{j_m}):=\sum_{i=1}^m\deg(X_{j_i}).
\]
Any $m$-vector $X$ can be expressed as a sum
\[
X_p=\sum_{J}\lambda_J(p)(X_J)_p,
\]
where $J=(j_1,\ldots,j_m)$, $1\le j_1<\cdots <j_m\le n$, is an ordered multi-index, and $X_J:=X_{j_1}\wedge\ldots\wedge X_{j_m}$. The degree of $X$ at $p$ with respect to the adapted basis $(X_i)_{1\le i\le n}$ is defined by
\[
\max\{\deg((X_J)_p):\la_J(p)\neq 0\}.
\]
It can be easily checked that the degree of $X$ is independent of the choice of the adapted basis and it is denoted by $\deg(X)$.

If $X=\sum_J\la_J X_J$ is an $m$-vector expressed as a linear combination of simple $m$-vectors $X_J$, its projection onto the subset of $m$-vectors of degree $d$ is given by
\begin{equation}
\label{projection onto degree d}
(X)_d=\sum_{\deg(X_J)=d} \la_JX_J,
\end{equation}
and its projection over the subset of $m$-vectors of degree larger than $d$ by
\[
\Pi_d(X)=\sum_{\deg(X_J)\ge d+1} \la_JX_J.
\]

In an equiregular graded manifold with a local adapted basis $(X_1, \ldots,X_n)$, defined as in (\ref{local adapted basis to the bundle}), the maximal degree that can be achieved by an $m$-vector, $m\le n$,  is the integer $d_{\max}^m$ defined by 
\begin{equation}
\label{maximum degree over N}
 d_{\max}^m:=\deg(X_{n-m+1})+\cdots+\deg(X_{n}).
\end{equation}

\subsection{Degree of a submanifold}
Let $\Phi:\bar{M} \to N$ be a $C^1$ immersion in an equiregular graded manifold $(N,(\mh^1,\ldots, \mh^s))$ such that $\dim(\bar{M})=m<n=\dim(N)$. Following \cite{DonneMagnani, vittonemagnani}, we define the degree of $M=\Phi(\bar{M})$ at a point $\bar{p}\in \bar{M}$ by 
\[
\deg_M(\bar{p}):=\deg(v_1\wedge\ldots\wedge v_m),
\]
where $v_1,\ldots,v_m$ is a basis of $T_{\bar{p}}M=(d\Phi)_{\bar{p}}(T_{\bar{p}} \bar{M})$ and $d\Phi$. We  denote by $T_{\bar{p}} M=(d\Phi)_{\bar{p}}(T_{\bar{p}} \bar{M})$ the tangent space at $p=\Phi(\bar{p})$, where $(d\Phi)_{\bar{p}}$ is the differential of $\Phi$ at $\bar{p} \in \bar{M}$. We use this notation in order to emphasize that we consider the tangent space of the image $\Phi(\bar{p})$ of a fixed point $\bar{p}$ in $\bar{M}$. The \textit{degree} $\deg(M)$ of a immersed submanifold $M$ is the integer 
\[
\deg(M):=\max_{\bar{p}\in \bar{M}} \deg_{M}(\bar{p}).
\]
We define the \textit{singular set} of a submanifold $M$ by  $\Phi(\bar{M}_0)$ where
\begin{equation}
\label{singular set}
 \bar{M}_0=\{\bar{p} \in \bar{M} : \deg_M(\bar{p})<\deg(M) \}.
\end{equation}
Singular points can have different degrees between $m$ and $\deg(M)-1$.  Following \cite[0.6.B]{Gromov} an alternative way to define the pointwise degree is by means of the formula  
\begin{equation*}
 \deg_M(\bar{p})= \sum_{j=1}^s j (\tilde{m}_j(\bar{p})- \tilde{m}_{j-1} (\bar{p})),
\end{equation*}
setting $\tilde{m}_{0}=0$ and $\tilde{m}_j (\bar{p})=\text{dim}(T_{\bar{p}}M \cap \mathcal{H}_p^j )$. Namely, the degree is the homogenous dimension of the flag
\begin{equation}
\label{eq:tanflag}
 \tilde{\mathcal{H}}_p^1 \subset \tilde{\mathcal{H}}_p^2 \subset\cdots\subset \tilde{\mathcal{H}}_p^s =T_{\bar{p}}M,
\end{equation}
where $\tilde{\mh}_p^j:=T_{\bar{p}}M \cap \mathcal{H}_p^j$. As we pointed out in \cite[Section 3]{2019arXiv190505131C} the area functional associated to an immersed sumbanifold depends on the degree.

\begin{definition}
\label{def:areafunctional}
Let $M$ be a $C^1$ immersed submanifold of degree $d=\deg(M)$ in an  equiregular graded manifold $(N,\mathcal{H}^1,\ldots, \mh^s)$ endowed with a Riemannian metric $g$. Let $\mu$ be a Riemannian metric in $M$ and $e_1,\ldots,e_m$ be  a $\mu$ orthonormal basis. Then the \emph{degree $d$ area} $A_d$ is defined by
 \begin{equation}
 \label{eq:projected_integral_formula_Ad}
  A_d(M')=\int_{{M'}} 
 |\left( e_1\wedge \ldots \wedge e_m \right)_d|_{g} \ d\mu(p),
 \end{equation}
for any bounded measurable set $M'\subset M$ and where $d\mu$ is the  Riemannian volume given by $\mu$. In the previous formula $(\cdot)_d$ denotes the projection onto the subset of $m$-vectors of degree $d$ defined in \eqref{projection onto degree d}.
\end{definition}

\subsection{Admissible variations and admissibility system of PDEs}
Given a graded manifold $(N,\mh^1,\ldots,\mh^s)$, we consider a generic Riemannian metric $g=\escpr{\cdot, \cdot}$ on $TN$.  Let $\Phi:\bar{M} \to N$ be a smooth
immersion in $N$, we set $M=\Phi(\bar{M})$ and $d=\deg(M)$. Let $(X_i)_i$ be a local adapted basis around $p \in M$.
Following \cite[Section 5]{2019arXiv190505131C} we recall the notions of admissible variation, its variational vector field, admissible and integrable vector field.
\begin{definition}
\label{def:admissible}
A smooth map $\Gamma:\bar{M}\times (-\eps,\eps)\to N$ is said to be \emph{an admissible variation} of $\Phi$ if $\Gamma_t:\bar{M}\to N$, defined by $\Gamma_t(\bar{q}):=\Gamma(\bar{q},t)$, satisfies the following properties
\begin{enumerate}
\item[(i)] $\Gamma_0=\Phi$,
\item[(ii)] $\Gamma_t(\bar{M})$  is an immersion of the same degree as $\Phi(\bar{M})$ for small enough $t$, and
\item[(iii)] $\Gamma_t(\bar{q})=\Phi(\bar{q})$ for $\bar{q}$ outside of a given compact set of $\bar{M}$.
\end{enumerate}
\end{definition}

\begin{definition}
Given an admissible variation $\Gamma$, the \emph{associated variational vector field} is defined by
\begin{equation}
\label{eq:admissibleV}
V(\bar{q}):=\frac{\ptl\Gamma}{\ptl t}(\bar{q},0).
\end{equation}
\end{definition}

Let $\mathfrak{X}_0(\bar{M},N)$ be the space of compactly supported smooth vector fields on $\bar{M}$ with value in $N$.  Since it turns out that variational vector fields associated to an admissible variations satisfy the system \eqref{eq:system of PDEs for admissible} (see \cite[Section 5]{2019arXiv190505131C}) we are led to the following definition

\begin{definition}
\label{def:adm}
Given an immersion $\Phi:\bar{M}\to N$, a vector field $V\in \mathfrak{X}_0(\bar{M},N)$ is said to be \emph{admissible}  if it satisfies the system of first order PDEs
 \begin{equation}
 \label{eq:system of PDEs for admissible}
 0=\langle e_1\wedge\ldots \wedge e_m,\nabla_{V(\bar{q})}X_J\rangle +\sum_{j=1}^m \langle e_1 \wedge \ldots \wedge\nabla_{e_j}  V \wedge \ldots \wedge e_m ,X_J\rangle 
\end{equation}
where $X_J=X_{j_1} \wedge \ldots \wedge X_{j_m}$,  $\deg(X_J)>d$ and $e_1,\ldots,e_m$ is basis of $T_{\bar{q}} M$, for each $\bar{q}$ in $\bar{M}$ such that $q= \Phi(\bar{q})$.
\end{definition}
\begin{definition}
We say that an admissible vector field $V\in\mathfrak{X}_0(\bar{M},N)$ is \emph{integrable} if there exists an admissible variation such that the associated variational vector field is $V$.
\end{definition}

\section{Intrinsic coordinates for the admissibility system of PDEs}
\label{sc:admsys}
Let $\Phi:\bar{M} \to N$ be a smooth immersion in a graded manifold, $M=\Phi(\bar{M})$ and $d=\deg(M)$. By \cite[Proposition 6.4]{2019arXiv190505131C} we realize that the admissibility of a vector field $V$ is independent of the metric. Therefore we can use any metric in order to study the system. Let $\bar{p}$ be a point in $\bar{M}$ such that $p=\Phi(\bar{p})$ is a point in $M \setminus M_0$, that is an open set thanks to \cite[Corollary 2.4]{2019arXiv190505131C}. Then there exists  an open neighborhood $\bar{O}' \subset \bar{M}$ of $\bar{p}$ such that $\Phi(\bar{O}')$ has fixed degree $d$. Moreover, we can always find  an open neighborhood $\bar{O} \subset \bar{O}'$ such that $\Phi(\bar{O})=O$ is an embedding of fixed degree $d$ . From now on we will consider this piece of submanifold $O$.

Letting $TO$ be the tangent bundle of $O$, we consider the subbundle $\tilde{\mh}^i=TO \cap \mh^{i}$ for each $i=1,\ldots,s$. Then the submanifold $O$ inherits from the ambient space an increasing filtration $\tilde{\mh}^1 \subset \ldots \subset \tilde{\mh}^s$, pointwise given by the flag \eqref{eq:tanflag}, that makes $(O, \tilde{\mh}^1,\ldots, \tilde{\mh}^s)$ a graded structure. Evidently, (i) in Definition~\ref{def:incfilt} is satisfied. On the other hand, if $X\in\tilde{\mathcal{H}}^i$ and $Y\in\tilde{\mh}^j$, we can extend both vector fields in a neighborhood of $N$ so that the extensions $X_1$, $Y_1$ lie in $\mh^i$ and $\mh^j$, respectively. Then $[X,Y]$ is a tangent vector to $O$ that coincides on $N$ with $[X_1,Y_1]\in \mh^{i+j}$. Hence $[X,Y]\in\tilde{\mh}^{i+j}$. This implies condition (ii) in Definition~\ref{def:incfilt}. Moreover, $(O, \tilde{\mh}^1,\ldots, \tilde{\mh}^s)$ is also equiregular by \cite[Proposition 3.7]{2019arXiv190505131C}, since the degree is constant equal to $d$ on $O$. Reducing $O$ if necessary, following the same argument of Section \ref{sc:prel}, there exists a local adapted basis $(\tilde{E}_1,\ldots,\tilde{E}_m)$ to the filtration $\tilde{\mh}^1\subset \ldots \subset \tilde{\mh}^s$. For each $j=1,\ldots,m$ we set $\deg(\tilde{E}_j)=\ell_j$, then we can  extend each vector field $\tilde{E}_j$ in a neighborhood $U$ of $N$ around $p$ so that the extensions $E_j$ lie in $\mh^{\ell_j}$. 
 Finally we complete this basis of vector fields $(E_1,\ldots,E_m)$ to a basis of the ambient space $T U$ adding the vector fields $V_{m+1},…,V_n$ of increasing degree such that a sorting of $\{E_1,…,E_m, V_{m+1},…,V_n\}$ is an adapted basis of  $T U$. Then we consider the metric $g=\escpr{\cdot,\cdot}$ that makes  $E_1,\ldots,E_m,V_{m+1},\ldots,V_n$ an orthonormal basis in a neighborhood $U$ of $p$.  We will denote by $(Y_1,\ldots,Y_n)$ the local adapted basis generated by this sorting of $E_1,\ldots,E_m,V_{m+1},\ldots,V_n$. From now on we will denote also denote $(\tilde{E}_1,\ldots,\tilde{E}_m)$ by $(E_1,\ldots,E_m)$ with a little abuse of notation.
\begin{definition}
Letting $\iota_0$ be the integer defined by 
\begin{equation}
\label{eq:iota}
\iota_0(O)=\max_{\bar{p} \in \bar{O}}  \min_{1\le \ell \le s} \{ \alpha  :  \tilde{m}_{\ell} (\bar{p}) \ne 0 \},
\end{equation}
 we set 
\begin{equation}
\label{eq:k}
k:=n_{\iota_0}- \tilde{m}_{\iota_0},
\end{equation}
where $ \tilde{m}_{\ell} (\bar{p})$ is defined a line before of equation \eqref{eq:tanflag}.
\end{definition}
\begin{remark}
\label{rk:escprW}
Let $W$ be a vector field on $U \subset N$. Having in mind the equation \eqref{eq:system of PDEs for admissible} we consider the scalar product 
\begin{equation}
\label{eq:scpre}
\escpr{E_1\wdw \overset{(j)} {W} \wdw E_m, Y_{\ell_1} \wdw Y_{\ell_m}}
\end{equation}
where $J=(\ell_1,\ldots,\ell_m)$, $1\le \ell_1\le \ldots \le \ell_m \le n$ and 
\[
\deg(Y_{\ell_1})+ \ldots+ \deg(Y_{\ell_m})\ge d+1.
\]
If there are at least two $Y_{\ell_\alpha},Y_{\ell_\beta} \in \{V_{m+1},\ldots,V_{n} \}$ with  $\alpha,\beta \in \{1,\ldots,m\}$ then \eqref{eq:scpre} is equal to zero thanks to the orthogonal assumption of the basis $E_1,\ldots,E_m,$ $V_{m+1},\ldots,V_n$. 
If $\{Y_{\ell_1},\ldots, Y_{\ell_m}\} \cap\{V_{m+1},\ldots,V_n\}= \emptyset$, then $\deg(Y_{\ell_1} \wdw Y_{\ell_m})\le d$. 
Finally, if there exists only one $Y_{\ell_\alpha}=V_i$ for some $i \in \{m+1,\ldots,n\}$ then $\alpha=j$, if not  \eqref{eq:scpre} is equal to zero by orthogonality assumption of the basis $E_1,\ldots,E_m$. Then, denoting by $\sigma_{i}^{j}$ the permutation caused by the reordering and by   $\text{sgn}(\sigma_{i}^{j})=\pm 1$ its sign, we have 
\[
Y_J=\text{sgn}(\sigma_{i}^{j}) E_1 \wdw \overset{(j)} {V_i} \wdw E_m.
\]
Since $$\deg(E_1 \wdw \overset{(j)} {V_i} \wdw E_m )> d$$ we deduce that $\deg(V_i) > \deg(E_j)$. Then \eqref{eq:scpre} coincides with 
\begin{align*}
&\text{sgn}(\sigma_{i}^{j}) \escpr{E_1\wdw \overset{(j)} {W} \wdw E_m, E_1 \wdw \overset{(j)} {V_i} \wdw E_m }\\
&=\text{sgn}(\sigma_{i}^{j}) \sum_{l=m+1}^n \escpr{W,V_l} \text{sgn}(\sigma_{l}^{j}) \delta_{l,i}\\
&=\text{sgn}(\sigma_{i}^{j})^2 \escpr{W,V_i}=\escpr{W,V_i},
\end{align*}
where $\delta_{l,i}$ is the Kronecker delta.
Since $(V_i)_i$ and $(E_j)_j$ have increasing degree, we obtain  $\deg(V_i) > \deg(E_1)=\iota_0$ if and only if $i=m+k+1,\ldots,n$, where $k$ is defined in \eqref{eq:k}. 
Therefore  we deduce that the only $m$-vectors $Y_{\ell_1} \wdw Y_{\ell_m}$ of degree strictly greater than $d$ such that \eqref{eq:scpre}  is different from zero are 
\[
\text{sgn}(\sigma_{i}^{j}) E_1 \wdw \overset{(j)} {V_i} \wdw E_m,
\]
for $i=m+k+1,\ldots,n$ and $\deg(V_i)>\deg(E_j)$. 
\end{remark}

\begin{definition}
We say that a vector field $V_l  \in \{V_{m+1},\ldots,V_{n}\}$ is horizontal if $ \deg(V_l) \le \deg(E_1)=\iota_0$ and 
is vertical if $\deg(V_l) > \deg(E_1)=\iota_0$. The horizontal bundle $\mh$ is generated by $V_{m+1},\ldots,V_{m+k}$ and the vertical bundle $\mathcal{V}$ is generated by $V_{m+k+1},\ldots,V_{n}$, where $k=n_{\iota_0}-\tilde{m}_{\iota_0}$. 
\end{definition}

Thanks to \cite[Proposition  5.5]{2019arXiv190505131C} we know that $V \in \mathfrak{X}_0(\bar{M},N)$ is admissible if and only if 
\begin{equation}
\label{eq:perpvector}
V^{\perp}= \sum_{h=m+1}^{m+k} g_h V_h+ \sum_{r=m+k+1}^{n} f_r  V_r
\end{equation}
is admissible. We denote by $V^{\perp}_{\mh}=\sum_{h=m+1}^{m+k} g_h V_h$ (resp. $V^{\perp}_{\mathcal{V}}=\sum_{r=m+k+1}^{n} f_r  V_r$) the horizontal projection on $\mh$ (resp. the vertical projection on $\mathcal{V}$). For $h=m+1,\ldots,m+k$ and $r=m+k+1,\ldots,n$, $g_h,f_r$ are smooth functions on $O$  and when we evaluate the vector field $V^{\perp}$ at $\bar{q} \in \bar{O}$ we mean 
\[
V^{\perp}(\bar{q})=\sum_{h=m+1}^{m+k} g_h(\Phi(\bar{q})) (V_h)_{\Phi(\bar{q})}+ \sum_{r=m+k+1}^{n} f_r (\Phi(\bar{q}))  (V_r)_{\Phi(\bar{q})}.
\]
Therefore, locally we can consider the vector field $V^{\perp}$ defined on $O$ and extend $V^{\perp}$ to the open neighborhoood $U \subset N$. Then, putting $V^{\perp}$ in \eqref{eq:system of PDEs for admissible} we have 
\begin{equation}
\label{eq:systemPDEsLiebrackets}
\begin{aligned}
0=&\escpr{E_1\wdw E_m,\nabla_{V^{\perp}} Y_J}+\sum_{j=1}^m \langle E_1 \wedge \ldots \wedge\nabla_{E_j}  V^{\perp} \wedge \ldots \wedge E_m , Y_J\rangle \\
=&\sum_{j=1}^m - \escpr{E_1\wdw \nabla_{V^{\perp}} E_j \wdw E_m, Y_J}\\
&+ \langle E_1 \wedge \ldots \wedge\nabla_{E_j}  V^{\perp} \wedge \ldots \wedge E_m , Y_J\rangle\\
=&\sum_{j=1}^m \langle E_1 \wedge \ldots \wedge \overset{(j)}{(\nabla_{E_j}  V^{\perp}-\nabla_{V^{\perp}} E_j) } \wedge \ldots \wedge E_m , Y_J\rangle \\
=& \sum_{j=1}^m \langle E_1 \wedge \ldots \wedge \overset{(j)}{[E_j,V^{\perp}] } \wedge \ldots \wedge E_m , Y_J\rangle.
\end{aligned}
\end{equation}
By Remark~\ref{rk:escprW} we have to consider the scalar product only with the $m$-vector $$Y_J=\text{sgn}(\sigma_{i}^{\alpha}) E_1 \wdw  \overset{(\alpha)} {V_i}  \wdw E_m$$ for $i=m+k+1,\ldots,n$, $\alpha=1,\ldots,m$, $\deg(V_i)>\deg(E_{\alpha})$ and $\text{sgn}(\sigma_{i}^{\alpha})=\pm 1$ is the sign of the permutation $\sigma_{i}^{\alpha}$ caused by the reordering. By substituting the expression \eqref{eq:perpvector} of  $V^{\perp}$ in equation \eqref{eq:systemPDEsLiebrackets}, we obtain that \eqref{eq:system of PDEs for admissible} is equivalent to
\begin{equation}
\label{eq:local system of PDEs2}
\begin{aligned}
  \sum_{j=1}^m &\Big( \sum_{r=m+k+1}^n \tilde{c}_{i j r \alpha}  E_j (f_r)+ \sum_{h=m+1}^{m+k} \tilde{c}_{i j h \alpha}  E_j (g_h)   \\
  &+  \sum_{r=m+k+1}^n \tilde{b}_{ i j r \alpha} f_r + \sum_{h=m+1}^{m+k} \tilde{a}_{i j h \alpha} g_h  \Big) =0,
\end{aligned}
\end{equation}
where
\begin{align*}
\tilde{c}_{i j t \alpha}&= \text{sgn}(\sigma_{t}^{j})  \text{sgn}(\sigma_{i}^{\alpha}) \escpr{E_1 \wdw \overset{(j)} {V_t} \wdw E_m ,E_1 \wdw \overset{(\alpha)} {V_i} \wdw E_m }\\
\tilde{a}_{i j h \alpha }&= \text{sgn}(\sigma_{i}^{\alpha}) \escpr{ E_1 \wedge \ldots \wedge \overset{(j)} {[E_j, V_h] } \wedge \ldots \wedge E_m, E_1 \wdw  \overset{(\alpha)} {V_i}  \wdw E_m } \\
\tilde{b}_{i j r \alpha }&= \text{sgn}(\sigma_{i}^{\alpha}) \escpr{ E_1 \wedge \ldots \wedge \overset{(j)} {[E_j, V_r]} \wedge \ldots \wedge E_m, E_1 \wdw  \overset{(\alpha)} {V_i}  \wdw E_m },\\
\end{align*}
for  $t=m+1,\ldots,n$, $r=m+k+1,\ldots,n$, $h=m+1,\ldots,m+k$, $\alpha=1,\ldots,m$, $i=m+k+1,\ldots,n$ and $\deg(V_i)>\deg(E_{\alpha})$. Then we have that  $\tilde{c}_{i j t \alpha}$
is equal to $1$ for $i=t>m+k$, $\alpha=j$ and $\deg(V_i)>\deg(E_j)$  or equal to zero otherwise. Moreover, we notice  by Remark~\ref{rk:escprW} that $\tilde{a}_{i j h \alpha }$ and $\tilde{b}_{i j r \alpha }$ are different from zero only when $\alpha=j$ and in particular  we have
\begin{equation}
\label{eq:newA}
a_{i j h}:=\tilde{a}_{i j h j}=\escpr{ V_i, [E_j,V_h]},
\end{equation}
for $h=m+1, \ldots,m+k$, $i=m+k+1,\ldots,n$, $\deg(V_i)>\deg(E_j)$ and 
\begin{equation}
\label{eq:newB}
b_{i j r}:=\tilde{b}_{i j r j }= \escpr{V_i, [E_j,V_r]},
\end{equation}
for $i,r=m+k+1,\ldots,n$ and $\deg(V_i)>\deg(E_j)$. Hence $V$ is admissible if and only if 
\begin{equation}
\label{eq:admsystem}
E_j( f_{i})=-\sum_{r=m+k+1}^n b_{ i j r} \, f_r - \sum_{h=m+1}^{m+k} a_{ i j h } \, g_h ,
\end{equation}
for $i=m+k+1,\ldots,n$ and $\deg(V_i)>\deg(E_j)$.
\begin{remark} \mbox{}
\label{rk:Lip}
\begin{enumerate} 
\item[1.] In all the previous computations we strongly used the tools of differential geometry such as the covariant derivative and the Levi-Civita connection. However, we  notice that the coefficients $a_{i j h}$ and $b_{i j r}$ are defined almost everywhere if we only assume that the vector fields $E_1,\ldots,E_m,V_{m+1},\ldots,V_n$ are Lipschitz continuous. Indeed, under this Lipschitz assumption, the Lie brackets $[E_j,V_h]$ and $[E_j,V_r]$ for $j=1,\ldots,m$, $h=m+1, \ldots,m+k$ and $r=m+k+1,\ldots,n$  are defined almost everywhere, thanks to  \cite{EF17}. Therefore it would be interesting to consider $C^{1,1}$ immersions and deducing the admissibility system \eqref{eq:admsystem} in a weak formulation using the tools of first order differential calculus for general metric measure spaces, developed in recent years by \cite{Cheeger99,HKST15,Gigli18, AGS13}.
\item [2.] Even in this smooth setting we realize that in the admissibility system \eqref{eq:admsystem}  we can consider  the functions  $f_{m+k+1},\ldots,f_n$ to be  continuously differentiable on $O$ and $g_{m+1},\ldots,g_{m+k}$ in the class of continuous functions on $O$.
\end{enumerate}
\end{remark}

\begin{example}[Horizontal submanifolds]
\label{horizontal submanifolds}
Given $n>1$ we consider the Heisenberg group  $\mathbb{H}^n$, defined as $\rr^{2n+1}$ endowed with the distribution $\mh$ generated by 
\[
X_i=\dfrac{\partial}{\partial {x_i}}+\dfrac{y_i}{2}\dfrac{\partial}{\partial t}, \quad
Y_i=\dfrac{\partial}{\partial {y_i}}-\dfrac{x_i}{2}\dfrac{\partial}{\partial t} \quad i=1,\ldots,n. 
\]
 
The Reeb vector fields is provided by $T=\partial_t=[X_i,Y_i]$ for $i=1,\ldots,n$ and has degree equal to $2$. Let $g=\escpr{\cdot,\cdot}$ be the Riemannian metric that makes $(X_1,\ldots,X_n, Y_1,\ldots,Y_n,T)$ an orthonormal basis.  Let $\Omega$ be an open set of $\rr^m$, with $m \le n$. Here we consider a smooth immersion $\Phi:\Omega \to \mathbb{H}^n$ such  $M=\Phi(\Omega)$ is a horizontal submanifold.  Let $E_1,\ldots,E_m$ be an orthonormal local frame, then we have
\begin{equation}
E_{j}=\sum_{i=1}^{n} {\alpha}_{j i}  X_i + \beta_{j i}  Y_i \quad \text{for} \quad j=1,\ldots,m,
\end{equation}
where ${\alpha}_{j i}=\escpr{E_j,X_i}$, ${\beta}_{j i}=\escpr{E_j,Y_i}$, $\sum_{i=1}^n {\alpha}_{j i}^2+ \beta_{j i}^2=1$ for each $j=1,\ldots,m$ and the matrix $$A(\bar{p})=\left((\alpha_{ji})(\Phi(\bar{p}))| (\beta_{ji})(\Phi(\bar{p}))\right)_{j=1,\ldots,m}^{i=1,\ldots,n}$$ has full rank equal to $m$, for each $\bar{p}\in \Omega$. Since $M$ is horizontal we also have that 
\[
\escpr{[E_j,E_{\nu}],T}=\sum_{i=1}^n \sum_{k=1}^n \escpr{[\alpha_{ji}X_i+\beta_{ji}Y_i,\alpha_{\nu k}X_k+\beta_{\nu k}Y_k],T}=0,
\]
that is equivalent  to
\begin{equation}
\label{eq:albeT}
\sum_{i=1}^n \alpha_{ji} \beta_{\nu i}-\beta_{ji} \alpha_{\nu i}=0.
\end{equation}
Therefore a vector field $V=\sum_{l=1}^n g_l \, X_i + g_{l+n} \, Y_l + f \,T$ is admissible if and only if it satisfies the system  \eqref{eq:admsystem}, that in this case is given by
\[
E_j(f)=-\escpr{[E_j,T],T} f - \left(\sum_{h=1}^n ( \escpr{[E_j,X_h],T} g_h +  \escpr{[E_j,Y_l],T} g_{h+n}) \right),
\]
for $j=1,\ldots,m$. A straightforward computation shows that this system is equivalent to 
\begin{equation}
\label{eq:Lagrangiansy}
E_j(f)=\sum_{i=1}^n \beta_{ji} g_{i} - \alpha_{ji} g_{i+n} \quad \text{for} \quad j=1,\ldots,m.
\end{equation}
A necessary and sufficient conditions  for the uniqueness and the existence of a solution of  the admissibility system \eqref{eq:Lagrangiansy} (see \cite[Theorem 3.2, Chapter VI]{Hartman}) are given by
\begin{equation}
\label{eq:compcond}
E_j E_{\nu} (f)- E_{\nu}E_j(f)=E_j \left(\sum_{i=1}^n \beta_{\nu i} g_{i} - \alpha_{\nu i} g_{i+n}  \right)-E_{\nu}\left( \sum_{i=1}^n \beta_{ji} g_{i} - \alpha_{ji} g_{i+n} \right),
\end{equation}
for each $j,\nu=1,\ldots,m$. These are the so called integrability condition \cite[Eq. (1.4), Chapter VI]{Hartman}. A straightforward computation shows that the right hand side of is equal to
\begin{equation}
\begin{aligned}
&\sum_{i=1}^n (E_j(\beta_{\nu i})- E_{\nu}(\beta_{ji})) g_i + (E_{\nu}(\alpha_{ji})-E_j( \alpha_{\nu i})) g_{i+n}\\
&+\sum_{i=1}^n \beta_{\nu i} E_j(g_i) - \beta_{ji} E_{\nu}(g_i) -\alpha_{\nu i} E_j(g_{i+n}) + \alpha_{ji} E_{\nu}(g_{i+n}).
\end{aligned}
\end{equation}
Moreover, the left hand side is equal to 
\begin{align*}
[E_j,E_{\nu}](f)&=\sum_{k=1}^m c_{j \nu}^{k} E_k(f)=\sum_{k=1}^m c_{j \nu}^{k} \sum_{i=1}^n \beta_{ki} g_{i} - \alpha_{ki} g_{i+n} \\
&=\sum_{i=1}^n \escpr{[E_j,E_{\nu}],Y_{i}} g_{i}-  \escpr{[E_j,E_{\nu}],X_{i}} g_{i+n}\\
&=\sum_{i=1}^n (E_j(\beta_{\nu i})-E_{\nu}(\beta_{ji} ) ) g_{i} + (E_{\nu}(\alpha_{ji})-E_j(\alpha_{\nu i}))g_{i+n}.
\end{align*}
Therefore the compatibility (or integrability) conditions are given by 
\begin{equation}
\label{eq:ccLM}
\sum_{i=1}^n \beta_{\nu i} E_j(g_i) - \beta_{ji} E_{\nu}(g_i) -\alpha_{\nu i} E_j(g_{i+n}) + \alpha_{ji} E_{\nu}(g_{i+n})=0,
\end{equation}
for each $\nu,j=1,\ldots,m$. Moreover, taking into account \eqref{eq:albeT}, the equation \eqref{eq:ccLM} is equivalent to 
\begin{equation}
\label{eq:ccLM2}
\begin{aligned}
&\sum_{ i\ne k}^n (\beta_{\nu i} \alpha_{jk}-\beta_{ji} \alpha_{\nu k})X_k(g_i)+ (\beta_{\nu i} \beta_{jk}-\beta_{ji} \beta_{\nu k})Y_k(g_{i})\\
&+\sum_{ i\ne k}^n (\alpha_{j i} \alpha_{\nu k}-\alpha_{\nu i} \alpha_{j k})X_k(g_{i+n})+ (\alpha_{j i} \beta_{\nu k}-\beta_{\nu i} \beta_{j k})Y_k(g_{i+n})=0.
\end{aligned}
\end{equation}
\end{example}

\begin{remark}
Notice that if we want to find a solution $f$ of \eqref{eq:Lagrangiansy}, the controls $g_{i},\ldots,g_{2n}$ have to verify the compatibility conditions \eqref{eq:ccLM2}. Therefore  to obtain a suitable generalization of the holonomy map (defined for curves in  \cite[Section 5]{2019arXiv190204015C}) we need to consider the subspace of the space of horizontal vector fields on $M$ that verify  \eqref{eq:ccLM2}. We recognize that studying the holonomy map for these horizontal immersions is engaging problem that have been investigated by \cite{Gromov86,Pansu16}, but in the present work we will consider different kind of immersions that allow us to forget these compatibility conditions in the construction of the high dimensional holonomy map.
\end{remark}

\section{Ruled submanifolds in graded manifolds}
\label{sc:ruledsub}
In this section we consider a particular type of submanifolds for which the admissibility system reduces to a system of ODEs along the characteristic curves, that rule these submanifolds by determining their degree since the other adapted tangent vectors tangent to $M$ have highest degree equal to $s$.

\begin{definition}
\label{def:ruledimm}
Let $(N,\mh^1,\ldots,\mh^s)$ be an equiregular graded manifold of topological dimension $n$ and let  $\bar{M}$ a $m$-dimensional manifold with $m<n$. We say that an immersion  $\Phi: \bar{M} \to N$ is \emph{ruled} if 
\begin{equation}
\label{eq:degreerest}
\deg(M)=(m-1)s +\iota_0,
\end{equation}
 where $\iota_0 $ is the integer defined in \ref{eq:iota} satisfying $\dim(\tilde{\mh}^{\iota_0})=1$ and $M=\Phi(\bar{M})$. In this case, we will call the image of the immersion $M$ a ruled submanifold.\\
\end{definition}

Let $\bar{p}$ be a point in $\bar{M}$ such that $p=\Phi(\bar{p})$ is a point of maximum degree in $M$. Following the argument of Section~\ref{sc:admsys}, we consider an open neighborhood $\bar{O}$ of $\bar{p}$ such that $O=\Phi(\bar{O})$ is an embedding of fixed degree. Let $(E_1,\ldots,E_m)$ be an adapted basis to $TO$.
Therefore $\deg(E_1)=\iota_0$ and $\deg(E_j)=s$ for $j=2,\ldots,m$ and $k=n_{\iota_0}-1$.  Then we follow the construction described in Section \ref{sc:admsys} to provide the metric $g$ and the orthonormal basis $E_1,\ldots,E_m,V_{m+1},\ldots,V_n$  whose sorting is a local adapted basis of $TU$.
Since $\deg(E_j)\ge \deg(V_i)$ for each $j=2,\ldots,m$ and $i=m+k+1,\ldots,n$, the only derivative that appears in \eqref{eq:admsystem} is $E_1$. Therefore we deduce that a vector field $V^{\perp}$, given by equation \eqref{eq:perpvector}, is admissible if and only if it satisfies
\begin{equation}
E_1( f_{i})+\sum_{r=m+k+1}^n b_{i 1 r} f_r + \sum_{h=m+1}^{m+k} a_{ i 1 h} g_h=0,
\label{eq:ruledadm}
\end{equation}
for $i=m+k+1,\ldots,n$ and for each $q \in O$
\[
a_{ i 1 h}(q)=\escpr{ V_i(q), [E_1,V_h](q)},
\]
\[
b_{ i 1 r}(q)=\escpr{V_i(q), [E_1,V_r](q)}
\]
and $f_r \in C^1(O)$, $g_h \in C(O)$.
Given $p$ in $M$ each point $q$ in a local neighborhood $O$ of $p$ in $M$ can be reached using the exponential map as follows 
\begin{equation}
\label{eq:expexp}
q=\exp( x_1 E_1) \exp \left( \sum_{j=2}^m x_j E_j \right)(p).
\end{equation}
On this open neighborhood $O \subset M$ we consider the local coordinates $x=(x_1,x_2,\ldots,x_m)$ given by the inverse map $\Xi$ of the exponential map defined in \eqref{eq:expexp}. In the literature, these coordinates are commonly called  exponential or canonical coordinates of the second kind, see \cite{HH91,Bellaiche}. We set $\hat{x}:=(x_2,\dots,x_m)$. Given  a relative compact open subset $\Omega \subset \subset \Xi(O)$  we consider
\begin{equation}
\label{eq:Sigma0}
\Sigma_0=\{x_1=0\} \cap \Omega
\end{equation}
 be the $(m-1)$-dimensional leaf  normal to $E_1$.  Then there exists $\eps>0$ so that the closure of the cylinder 
\begin{equation}
\label{eq:cylinder}
\Omega_{\eps}=\{(x_1, \hat{x}) \ : \ 0< x_1<\eps, \ \hat{x} \in  \Sigma_0 \}
\end{equation}
is contained in $\Xi(O)$. Then $\Sigma_{\eps}=\{(\eps,\hat{x}) \ : \ \hat{x} \in \Sigma_0 \}$ is the top of the cylinder. Since $d \Xi(E_1)=\partial_{x_1}$ in this exponential coordinates of the second kind the \emph{admissibility system} \eqref{eq:ruledadm} is given by 
\begin{equation}
\label{eq:ruledadm2}
 \dfrac{\partial F(x)}{\partial x_1} =-B(x) F(x)-A(x) G(x),
\end{equation}
where we set 
\begin{equation}
\label{def:FG}
F=\begin{pmatrix} f_{m+k+1} \\ \vdots \\ f_n \end{pmatrix}, \quad G=\begin{pmatrix} g_{m+1} \\ \vdots \\ g_{m+k} \end{pmatrix}
\end{equation}
and we denote by $B$ the $(n-m-k)$ square matrix whose entries are $b_{i 1 r}$,  by $A$ the  $(n-m-k) \times k $ matrix whose entries are $a_{i 1 h}$.

\section{The high dimensional holonomy map for ruled submanifolds}
\label{sc:holonomy}
 For ruled submanifolds the system  \eqref{eq:admsystem} reduces to the system of ODEs \eqref{eq:ruledadm} along the characteristic curves.  Therefore, a uniqueness and existence result for the solution is given by the classical Cauchy-Peano Theorem, as in the case of curves in \cite[Section 5]{2019arXiv190204015C}.

Let $\Phi: \bar{M} \to N$ be a ruled immersion in a graded manifold. Let $\Omega_{\eps}$ be the open cylinder defined in \eqref{eq:cylinder} and $T_{\Sigma_0}(f)=f(0,\cdot)$ and $T_{\Sigma_{\eps}}(f)=f(\eps,\cdot)$ be the operators  that evaluate functions at $x_1=0$ and at $x_1=\eps$, respectively. 

Let $C_0(\Omega_{\eps})$  the Banach space of continuous functions on $\Omega_{\eps}$ vanishing at the infinity, that is the closure of the space of compactly supported function on  $\Omega_{\eps}$, see \cite[Theorem 3.17]{Rudin}.  We always consider for each  $ f \in C_0(\Omega_{\eps})$  the supremum norm
\[
\|  f \|_{\infty} =\sup_{x \in \Omega_{\eps}} |f(x)|.
\]
We will denote by $\bar{\Omega}_{\eps}$ the closure of the open set $\Omega_{\eps} \subset \rr^m$ and by $C(\bar{\Omega}_{\eps})$ the Banach space of continuous functions on the compact $\bar{\Omega}_{\eps}$. Then we consider the following Banach spaces:
\begin{enumerate}
\item $ \displaystyle \mathcal{H}_0( \Omega_{\eps})=\left\{ \sum_{h=m+1}^{m+k} g_h V_h \ : \  g_h \in  C_0( {\Omega}_{\eps})  \right\}$.
\item $\displaystyle \mathcal{V}^1(\bar{\Omega}_{\eps})=\left\{ \sum_{r=m+k+1}^{n} f_r V_r \ : \  \partial_{x_1} f_r \in C( \bar{\Omega}_{\eps}), f_r \in  C( \bar{\Omega}_{\eps}), T_{\Sigma_0}(f_r)=0 \right\}$.
\item $\displaystyle \mathcal{V}(\Sigma_{\eps})=\left\{ \sum_{r=m+k+1}^{n} f_r V_r \ : \  f_r \in  C_0 (\Sigma_{\eps}) \right\}$, where $C_0 (\Sigma_{\eps})$ is the space of continuous functions on $\Sigma_{\eps}$ vanishing at the infinity.
\end{enumerate}
Notice that the respective norms of these Banach spaces are given by
\begin{enumerate}
\item $\displaystyle \|  G \|_{\infty}= \max_{h=m+1,\ldots,m+k} \sup_{x \in \Omega_{\eps}} | g_h(x) | $ 
 \item $\displaystyle \|  F \|_{1}= \max_{r=m+k+1,\ldots,n}( \sup_{x \in \bar{\Omega}_{\eps}} | f_r (x)|+ \sup_{x \in \bar{\Omega}_{\eps}} | \partial_{x_1} f_r (x)| ) $ 
 \item $\displaystyle \|  F \|_{\infty, \Sigma_{\eps}}= \max_{r=m+k+1,\ldots,n} \sup_{\hat{x} \in \Sigma_{\eps}} | f_r(\hat{x} )| $,
\end{enumerate}
where $F$ and $G$ are defined in \eqref{def:FG}.

Therefore the existence and the uniqueness of the solution of the  Cauchy problem allows us to define the holonomy type map 
\begin{equation}
H_{M}^{\eps}: \mh_0(\Omega_{\eps}) \to \mathcal{V}(\Sigma_{\eps}),
\end{equation}
in the following way: we consider a horizontal compactly supported continuous vector field $$Y_{\mh}=\sum_{h=m+1}^{m+k} g_h V_h \in \mathcal{H}_0(\Omega_{\eps})  $$
and we fix  the initial condition $Y_V(0,\hat{x})=0$. Then there exists a unique solution  $$Y_{\mathcal{V}}=\sum_{r=m+k+1}^n f_r V_r \in \mathcal{V}^1(\Omega_{\eps})$$ of the admissibility system \eqref{eq:ruledadm2} with initial condition $Y_V (0,\hat{x})=0$. Letting 
\[
\mathbf{T}_{\Sigma_{\eps}}: \mathcal{V}^1(\Omega_{\eps}) \to \mathcal{V} (\Sigma_{\eps})
\]
be the evaluating operator for vertical vectors fields at $x_1=\eps$ defined by $\mathbf{T}_{\Sigma_{\eps}}(V)=V(\eps,\cdot)$, we define $H_{M}^{\eps}(Y_{\mh})=\mathbf{T}_{\Sigma_{\eps}}(Y_{\mathcal{V}})$.
\begin{definition}
We say that $\Phi$ restricted to $\bar{\Omega}_{\eps}$ is \emph{regular} if the image of the holonomy map $H_{M}^{\eps}$ is  a dense subspace of $\mathcal{V} (\Sigma_{\eps})$, that contains a normalized Schauder  basis of $\mathcal{V}(\Sigma_{\eps})$ (see \cite[Definition 14.2]{S70}) .
\end{definition}

The following result allows the integration of the differential system \eqref{eq:ruledadm2} to explicitly compute the holonomy map.

\begin{proposition}
\label{prop:inthol}
In the above conditions, there exists a square regular matrix $D(x_1,\hat{x})$ of order $(n-k-m)$ such that
\begin{equation}
\label{eq:F(b)}
F(\eps,\hat{x})=- D(\eps,\hat{x})^{-1}\int_0^{\eps} (D A)(\tau,\hat{x}) G(\tau,\hat{x}) \, d\tau,
\end{equation}
for each $\hat{x} \in \Sigma_0$.
\end{proposition}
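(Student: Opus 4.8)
The statement is a classical integrating-factor result for the linear ODE system \eqref{eq:ruledadm2} with the initial condition $F(0,\hat{x})=0$, carried out with $\hat{x}=(x_2,\dots,x_m)$ treated as a parameter. The plan is to produce a matrix $D(x_1,\hat x)$ that ``integrates'' the homogeneous part $\partial_{x_1}F=-B F$, then apply variation of parameters. Concretely, for each fixed $\hat x\in\Sigma_0$ let $D(\cdot,\hat x)$ be the unique solution of the matrix initial value problem
\begin{equation}
\label{eq:Ddef}
\frac{\partial D}{\partial x_1}(x_1,\hat x)=D(x_1,\hat x)\,B(x_1,\hat x),\qquad D(0,\hat x)=\mathrm{Id}_{n-k-m}.
\end{equation}
Since the entries of $B=(b_{i1r})$ are continuous on $\bar\Omega_\eps$ (indeed $b_{i1r}(q)=\escpr{V_i(q),[E_1,V_r](q)}$, smooth under our regularity hypotheses), Cauchy--Peano together with linearity gives a unique solution on $[0,\eps]$, depending continuously on $\hat x$. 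That $D$ is invertible for every $x_1$ follows from the standard Wronskian/Liouville argument: $\det D(x_1,\hat x)=\exp\!\big(\int_0^{x_1}\mathrm{tr}\,B(\tau,\hat x)\,d\tau\big)\neq 0$, so $D$ is a regular matrix, and $x_1\mapsto D(x_1,\hat x)^{-1}$ is $C^1$ as well.

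The main computation is then short: along any solution $F$ of \eqref{eq:ruledadm2} one differentiates the product $D(x_1,\hat x)F(x_1,\hat x)$ in $x_1$, using \eqref{eq:Ddef} and \eqref{eq:ruledadm2},
\begin{equation}
\label{eq:prodderiv}
\frac{\partial}{\partial x_1}\big(D F\big)=\frac{\partial D}{\partial x_1}F+D\frac{\partial F}{\partial x_1}=D B F+D\big(-B F-A G\big)=-\,(D A)\,G.
\end{equation}
The two $DBF$ terms cancel, leaving an exact $x_1$-derivative. Integrating \eqref{eq:prodderiv} from $0$ to $\eps$ and using the initial condition $F(0,\hat x)=0$, hence $D(0,\hat x)F(0,\hat x)=0$, yields
\begin{equation}
\label{eq:DFint}
D(\eps,\hat x)F(\eps,\hat x)=-\int_0^{\eps}(D A)(\tau,\hat x)\,G(\tau,\hat x)\,d\tau,
\end{equation}
and multiplying on the left by $D(\eps,\hat x)^{-1}$ gives exactly \eqref{eq:F(b)}. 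The dependence on $\hat x\in\Sigma_0$ is as claimed since every object above was constructed parameterwise.

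I do not expect a serious obstacle here; the only points requiring a little care are (a) verifying that $F$ obtained this way is genuinely the unique solution in the regularity class $\mathcal{V}^1(\Omega_\eps)$ prescribed in Section~\ref{sc:holonomy} — this is where one invokes that $G\in C_0(\Omega_\eps)$ forces $\partial_{x_1}F=-BF-AG\in C(\bar\Omega_\eps)$, so $F\in\mathcal V^1$ automatically and $T_{\Sigma_0}(F)=0$ holds by construction; and (b) the joint continuity of $D$ and $D^{-1}$ in $(x_1,\hat x)$, which follows from smooth/continuous dependence of ODE solutions on parameters together with the nonvanishing of $\det D$. If one wants to weaken smoothness to the Lipschitz setting of Remark~\ref{rk:Lip}, the same argument goes through with $D$ absolutely continuous in $x_1$ and \eqref{eq:prodderiv} holding almost everywhere, but in the stated generality the smooth version suffices.
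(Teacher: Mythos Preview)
Your proof is correct and follows essentially the same approach as the paper: the paper defines $D$ via $\partial_{x_1}D=DB$, $D(0,\hat x)=\mathrm{Id}$, invokes a separate lemma (the Liouville/Jacobi formula argument you sketched) for invertibility, then derives $\partial_{x_1}(DF)=-DAG$ and integrates. Your additional remarks on regularity and parameter dependence are not in the paper's proof but are consistent with the surrounding framework.
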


\begin{proof}
Lemma~\ref{lm:det} below allows us to find a regular matrix $D(x_1,\hat{x})$ such that $\partial_{x_1} D= DB$. Then equation $\partial_{x_1}F=-BF-AG$ is equivalent to $\partial_{x_1}(DF)=- DAG$. Integrating between $0$ and $\eps$, taking into account that $F(0,\hat{x})=0$ for each $\hat{x} \in \Sigma_0 $, and multiplying by $D(\eps,\hat{x})^{-1}$, we obtain \eqref{eq:F(b)}.
\end{proof}

\begin{lemma}
\label{lm:det}
Let $E$ be an open set of $\rr^{m-1}$. Let $B(t,\lambda)$ be a continuous family of square matrices on $[0,\eps]\times E$. Let $D(t,\lambda)$ be the solution of the Cauchy problem
\[
\partial_{t}D(t,\lambda)=D(t,\lambda) B(t,\lambda)\ \text{on }[0,\eps]\times E,  \quad D(0,\lambda)=I_d,
\]
for each $\lambda \in E$.
Then $\det  D(t,\lambda)\ne0$ for each $(t,\lambda) \in [0,\eps]\times E$.
\end{lemma}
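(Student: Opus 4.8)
The plan is to solve the matrix ODE explicitly and observe that its determinant satisfies a scalar linear ODE, so it can never vanish. First I would invoke the Cauchy--Peano (or rather Picard--Lindel\"of, since $B$ is continuous in $t$ and the equation is linear in $D$) existence and uniqueness theorem: for each fixed $\lambda \in E$, the linear Cauchy problem $\partial_t D = D B(t,\lambda)$, $D(0,\lambda) = I_d$, has a unique solution defined on all of $[0,\eps]$, because linearity rules out finite-time blow-up. This gives a well-defined matrix-valued function $D(t,\lambda)$; its smoothness in $\lambda$ is not needed for the statement, only that for each fixed $\lambda$ it is $C^1$ in $t$.

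Next, fix $\lambda$ and set $\delta(t) := \det D(t,\lambda)$. The key computational step is Liouville's formula (the Abel--Jacobi identity): differentiating the determinant along the flow gives
\[
\frac{d}{dt}\,\delta(t) = \delta(t)\,\operatorname{tr}\!\big(B(t,\lambda)\big).
\]
I would derive this from $\partial_t D = D B$ by the standard rule $\frac{d}{dt}\det D = \det D \cdot \operatorname{tr}(D^{-1}\partial_t D)$, valid wherever $D$ is invertible; to make this rigorous without circularity one can instead use the cofactor/Jacobi formula $\frac{d}{dt}\det D = \operatorname{tr}(\operatorname{adj}(D)\,\partial_t D) = \operatorname{tr}(\operatorname{adj}(D)\,D B) = \det(D)\operatorname{tr}(B)$, which holds identically without assuming invertibility. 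Since $t \mapsto \operatorname{tr}(B(t,\lambda))$ is continuous on the compact interval $[0,\eps]$, it is integrable, and the scalar linear ODE for $\delta$ integrates to
\[
\delta(t) = \delta(0)\,\exp\!\left(\int_0^t \operatorname{tr}\big(B(\sigma,\lambda)\big)\,d\sigma\right).
\]
Because $\delta(0) = \det I_d = 1 \neq 0$ and the exponential of a real number is strictly positive, $\delta(t) \neq 0$ for every $t \in [0,\eps]$, and since $\lambda$ was arbitrary this holds for all $(t,\lambda) \in [0,\eps]\times E$.

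I do not anticipate a serious obstacle here; the only point requiring care is justifying the differentiation-of-the-determinant formula in a self-contained way, which is why I would route through the adjugate (Jacobi) formula rather than the $D^{-1}$ form, so that invertibility of $D$ is an output rather than a hypothesis of the argument. One could alternatively avoid Liouville's formula entirely: if $\det D(t_0,\lambda) = 0$ for some $t_0$, pick a nonzero vector $v$ with $D(t_0,\lambda)v = 0$ (working with row vectors, or transposing to $\partial_t D^{\top} = B^{\top} D^{\top}$), and then $w(t) := D(t,\lambda)v$ solves the linear ODE $\partial_t w = (\text{something})\,w$ with $w(t_0) = 0$, forcing $w \equiv 0$ by uniqueness, contradicting $w(0) = v \neq 0$. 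Either route is short; I would present the Liouville/Abel computation as the cleanest.
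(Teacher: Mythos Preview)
Your proposal is correct and follows essentially the same route as the paper: the paper also applies the Jacobi formula $\partial_t(\det D)=\operatorname{Tr}(\operatorname{adj}(D)\,\partial_t D)=\det D\,\operatorname{Tr}(B)$ and then integrates to $\det D(t,\lambda)=\exp\big(\int_0^t\operatorname{Tr}(B)\big)>0$. The only differences are cosmetic---you add a word on existence/uniqueness and mention the alternative kernel-vector argument, neither of which the paper includes.
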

\begin{proof}
By the Jacobi formula we have 
\[
\dfrac{\partial (\det D(t,\lambda))}{\partial t}=\text{Tr}\left(\text{adj}\, D(t,\lambda)\, \dfrac{\partial D(t,\lambda)}{\partial t}\right),
\]
where $\text{adj} D$ is the classical adjoint (the transpose of the cofactor matrix) of $D$ and $\text{Tr}$ is the trace operator. Therefore
\begin{equation}
\label{eq:detode}
\dfrac{\partial (\det D(t,\lambda))}{\partial t}=\text{Tr}\left((\text{adj}\, D(t,\lambda)) D(t,\lambda) B(t,\lambda) \right)=\det D(t,\lambda)\, \text{Tr}(B(t,\lambda)).
\end{equation}
Since $\det D(0,\lambda)=1$, the solution for \eqref{eq:detode} is given by 
\[
 \det D(t,\lambda)=e^{\int_a^t \text{Tr}(B(\tau,\lambda)) \, d \tau}>0,
\]
for all $(t,\lambda) \in [0,\eps]\times E$. Thus, the matrix $D(t,\lambda)$ is invertible for each $(t,\lambda) \in [0,\eps]\times E$.
\end{proof}

\begin{definition}
We say that the matrix $\tilde{A}(x_1,\hat{x}):=(DA)(x_1,\hat{x})$ on $\Omega_{\eps}$ defined in Proposition \ref{prop:inthol} is linearly full $\rr^{n-m-k}$ if and only if for each $\hat{x} \in \Sigma_0 $
\[
\dim \left( \text{span} \left\{\tilde{A}^1(x_1,\hat{x}), \ldots, \tilde{A}^k(x_1,\hat{x})  \quad \forall \ x_1\in (0,\eps) \right\} \right)=n-m-k,
\]
where $\tilde{A}^i$ for $i=1,\ldots, k$ are the columns of $\tilde{A}(x_1,\hat{x})$.
\end{definition}

\begin{lemma}
\label{lm:operatorsurj}
Let $X$ be a Banach space and $Y \subset X$. Let $L: X \to \rr$ be a bounded linear functional, $L \not\equiv 0$ such that $L(x)=0$ for each $x \in Y$. Then $Y$ is not dense in $X$.
\end{lemma}
\begin{proof}
Fix $y$ in $\bar{Y}$. Then there exists $\{y_n\}_{n \in \nn} \subseteq Y$ such that  $y_n \to y$ as $n \to +\infty$. Since $L$ is continuous we have $L(y_n)\xrightarrow{n \to +\infty} L(y)$. On the other hand, by assumption $L(y_n)=0$, then we conclude that  $L(y)=0$. Therefore we have $L(y)=0$ for each $y \in \bar{Y}$. Assume by contradiction  that $Y$ is dense in $X$, i.e. $\bar{Y}=X$. Therefore we have $L(x)=0$ for each $x \in X$, that implies $L\equiv0$, that is absurd.
\end{proof}

\begin{proposition}
\label{prop:linearlyfull}
The immersion $\Phi$ restricted to $\bar{\Omega}_{\eps}$ is regular if and only if $\tilde{A}(x_1,\hat{x})$ is linearly full in $\rr^{n-m-k}$.
\end{proposition}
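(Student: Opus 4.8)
The plan is to prove both implications by reducing the regularity condition to a statement about the density of the image of the integral operator appearing in Proposition~\ref{prop:inthol}. Recall that by Proposition~\ref{prop:inthol} the holonomy map factors, up to multiplication by the invertible matrix $D(\eps,\hat{x})^{-1}$, through the operator
\[
G \longmapsto \int_0^{\eps} (DA)(\tau,\hat{x})\, G(\tau,\hat{x})\, d\tau = \int_0^{\eps} \tilde{A}(\tau,\hat{x})\, G(\tau,\hat{x})\, d\tau.
\]
Since $D(\eps,\hat{x})$ is continuous and invertible with continuous inverse (Lemma~\ref{lm:det}), left multiplication by $D(\eps,\hat{x})^{-1}$ is a topological isomorphism of $\mathcal{V}(\Sigma_{\eps})$, so the image of $H_M^{\eps}$ is dense (and contains a normalized Schauder basis) if and only if the image of $G \mapsto \int_0^{\eps}\tilde{A}(\tau,\cdot)G(\tau,\cdot)\,d\tau$ has the same property. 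Thus the statement is equivalent to: this integral operator has dense image in $\mathcal{V}(\Sigma_{\eps}) \cong C_0(\Sigma_{\eps})^{\,n-m-k}$ precisely when $\tilde{A}$ is linearly full.

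For the direction ``not linearly full $\Rightarrow$ not regular'', I would use Lemma~\ref{lm:operatorsurj}. If $\tilde{A}$ is not linearly full, then there is $\hat{x}_0 \in \Sigma_0$ and a nonzero row vector $w \in \rr^{n-m-k}$ with $w\,\tilde{A}(x_1,\hat{x}_0) = 0$ for every $x_1 \in (0,\eps)$; by continuity this persists, and one should check it forces a genuine linear constraint on the fibre over $\hat{x}_0$. Concretely, the bounded linear functional $L$ on $\mathcal{V}(\Sigma_{\eps})$ given by evaluating the vertical component at $\hat{x}_0$ and pairing with $w \cdot D(\eps,\hat{x}_0)$ annihilates the whole image of $H_M^{\eps}$: indeed $w\cdot D(\eps,\hat{x}_0)\cdot F(\eps,\hat{x}_0) = -\int_0^{\eps} w\,\tilde{A}(\tau,\hat{x}_0)\,G(\tau,\hat{x}_0)\,d\tau = 0$. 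Then Lemma~\ref{lm:operatorsurj} shows the image is not dense, hence $\Phi$ is not regular. One small point to handle carefully: the functional must be nonzero on $\mathcal{V}(\Sigma_{\eps})$, which holds because $w\cdot D(\eps,\hat{x}_0) \neq 0$ (as $D(\eps,\hat{x}_0)$ is invertible) and continuous functions can take arbitrary values at $\hat{x}_0$.

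For the converse ``linearly full $\Rightarrow$ regular'', the idea is that linear fullness lets us, at each fixed $\hat{x}$ and for a suitably chosen finite collection of points $x_1^{(1)},\ldots,x_1^{(N)} \in (0,\eps)$, find horizontal controls $G$ supported near the slices $\{x_1 = x_1^{(\ell)}\}$ whose images $\int_0^\eps \tilde{A}(\tau,\hat{x})G(\tau,\hat{x})\,d\tau$ span all of $\rr^{n-m-k}$; letting the controls also vary in the $\hat{x}$ variable and using partitions of unity on $\Sigma_0$, one produces, for any prescribed target in a dense subclass (e.g. vertical fields with components that are finite sums of products of bump functions in $\hat{x}$ and fixed vectors in $\rr^{n-m-k}$), a control $G \in \mathcal{H}_0(\Omega_\eps)$ whose image is exactly that target. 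Since such targets are dense in $\mathcal{V}(\Sigma_\eps)$ and one can arrange them to include a normalized Schauder basis (any basis of $C_0(\Sigma_\eps)^{n-m-k}$ built from such elementary vertical fields will do, after normalization), regularity follows. I expect the main obstacle to be this converse direction: passing from the pointwise-in-$\hat{x}$ spanning statement to a genuine continuous control on the cylinder requires a careful, uniform-in-$\hat{x}$ construction — one must ensure the chosen slices $x_1^{(\ell)}$ and the local right inverses depend continuously on $\hat{x}$ (or can be patched via partitions of unity without destroying the support condition in $\Omega_\eps$), and that the resulting $G$ genuinely lies in $C_0$, i.e. vanishes at infinity on $\Sigma_0$, which is where the compact-support/vanishing-at-infinity bookkeeping has to be done with some care.
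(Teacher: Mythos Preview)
Your treatment of the direction ``not linearly full $\Rightarrow$ not regular'' is essentially the paper's: you build the evaluation functional at $\hat{x}_0$ paired with $w\cdot D(\eps,\hat{x}_0)$ and invoke Lemma~\ref{lm:operatorsurj}. That part is fine.

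The substantive difference is in the other implication. You attack ``linearly full $\Rightarrow$ regular'' \emph{directly and constructively}: for each $\hat{x}$ pick finitely many slices $x_1^{(\ell)}$ where the columns of $\tilde{A}$ span, build bump-function controls near those slices, and patch over $\Sigma_0$ with partitions of unity to hit a dense set of targets containing a Schauder basis. The paper instead argues the \emph{contrapositive} ``not regular $\Rightarrow$ not linearly full'' by duality: if the image is not dense, Hahn--Banach produces a nonzero continuous functional $\mu$ on $\mathcal{V}(\Sigma_\eps)$ annihilating it; Riesz representation realizes $\mu$ as a vector-valued Radon measure; testing against separable controls $G(\tau,\hat{x})=h(\hat{x})g(\tau)$ and applying Lemma~\ref{lm:intRadon} plus the fundamental lemma of calculus of variations forces $\tilde{\Gamma}(\hat{x})\tilde{A}(\tau,\hat{x})=0$ for all $\tau$ and $|\mu|$-a.e.\ $\hat{x}$; finally Lusin's theorem extracts a single point $\hat{x}_0$ where this holds, contradicting linear fullness.

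What each buys: the paper's duality route completely sidesteps the uniformity-in-$\hat{x}$ issue you correctly flag as the main obstacle --- there is no need to choose slices continuously, build explicit right inverses, or manage $C_0$ bookkeeping, because the argument never constructs a single control. Your approach, if carried out, would yield something more explicit (actual controls hitting prescribed targets), but the patching over a non-compact $\Sigma_0$ and the verification that the resulting image genuinely contains a normalized Schauder basis of $C_0(\Sigma_\eps)^{\,n-m-k}$ are real work that the duality argument avoids entirely. Your plan is plausible, but the paper's route is both shorter and cleaner for this statement.
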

\begin{proof}
Assume that immersion $\Phi$ restricted to $\bar{\Omega}_{\eps}$ is not regular. Then the closure of image of the holonomy map  $\overline{\text{Range}(H_{M}^{\eps})}$ is a proper closed subspace of $\mathcal{V} (\Sigma_{\eps})$. By \cite[Corollary 1.8]{Brezis}  there exists  $\mu \in (\mathcal{V} (\Sigma_{\eps}))^*$, $\mu \not\equiv 0$ such that $\mu(F)=0$ for each $F \in \text{Range}(H_{M}^{\eps})$, where each element $F$ in $\text{Range}(H_{M}^{\eps})$ is given by the representation formula \eqref{eq:F(b)}.
Thanks to Riesz's Theorem (see for instance \cite[Theorem 4.7]{Maggi12}, \cite[Chapter 7]{Folland84} or \cite{BL74}) the total variation $|\mu|$ is a Radon measure on $\Sigma_{\eps}$ and there exists $|\mu|$-measurable function $\Gamma: \Sigma_{\eps} \to \rr^{n-m-k}$ with $|\Gamma|=1$ $|\mu|$-a.e. such that
\begin{equation}
\label{eq:vanishingintforallG}
\begin{aligned}
0= \mu ( F(\eps,\cdot ))&=-\mu \left(  D(\eps,\hat{x})^{-1}\int_0^{\eps}  (D A)(\tau,\hat{x}) G(\tau,\hat{x}) \, d\tau \right)\\
&=- \int_{\Sigma_{\eps}} \Ga(\hat{x}) D(\eps,\hat{x})^{-1}   \left(\int_0^{\eps}  (D A)(\tau,\hat{x}) G(\tau,\hat{x}) d\tau \right) d|\mu|(\hat{x}) \\
&=- \int_{\Sigma_{\eps}} \tilde{\Ga}(\hat{x})  \left(\int_0^{\eps}  (D A)(\tau,\hat{x}) G(\tau,\hat{x}) d\tau \right) d|\mu|(\hat{x}) \\
\end{aligned}
\end{equation}
where $\tilde{\Ga}=\Ga(\hat{x}) D(\eps,\hat{x})^{-1} \neq 0$ a.e. w.r.t. to $|\mu|$. As this formula \eqref{eq:vanishingintforallG} holds for any $G(\tau,\hat{x}) \in C_0(\Omega_{\eps})$ we can consider $G(\tau, \hat{x})= h(\hat{x}) g(\tau) $ for each $g \in C_0((0,\eps),\rr^k)$ and $h \in C_0( \Sigma_{\eps},\rr)$. We notice that $\tilde{\Gamma} (\hat{x}) \int_0^{\eps} (D A)(\tau,\hat{x}) g(\tau) \, d\tau \in L^1_{\text{loc}} (\Sigma_{\eps},|\mu|)$. Then, by Lemma \ref{lm:intRadon} and the fundamental lemma of the Calculus of Variations for continuous functions we deduce that $\tilde{\Ga}(\hat{x})\tilde{A}(\tau,\hat{x})=0$ for all $\tau \in (0,\eps)$ and $|\mu|$-a.e. in $\hat{x} \in \Sigma_{\eps}$. 

Since the $\text{supp}(|\mu|) \ne \emptyset $ for each $\hat{x} \in \text{supp}(|\mu|)$ there exists an open neighborhood $N_{\hat{x}}$ of $\hat{x}$ such that $|\mu|(N_{\hat{x}})>0$. Eventually reducing $N_{\hat{x}}$ we can assume $|\mu|(N_{\hat{x}})<+\infty$, by the locally finite property of the Radon measure. Therefore by the Lusin's Theorem \cite[Chapter 7]{Folland84} for every $\epsilon>0$ there exists a compact $K \subset N_{\hat{x}} $ such that $|\mu|(N_{\hat{x}} \smallsetminus K) < \epsilon$ and $\tilde{\Ga}|_{K}$ is continuous. Considering $\epsilon<|\mu|(N_{\hat{x}})$ we obtain $|\mu|(K)>0$, therefore there exists $\hat{x}_0 \in K$ such that  $\tilde{\Ga}(\hat{x}_0 ) \tilde{A}(t,\hat{x}_0 )=0$ for each $t \in (0,\eps) $. Then the columns of $\tilde{A}(t,\hat{x}_0) $  are contained in the hyperplane of $\rr^{n-m-k}$ determined by $\tilde{\Ga}(\hat{x}_0 )$.  Identifying the open set $\Sigma_{\eps}$ with the open set $\Sigma_0$, by the map $(\eps,\hat{x}) \to (0,\hat{x})$,  we deduce that $\tilde{A}$ is not linearly full.

Conversely, assume that $\tilde{A}$ is not linearly full. Then there exist a point $\hat{x}_0 \in \Sigma_0$ and a row vector with $(n-m-k)$ coordinates $\Ga\neq 0$ such that $\Ga\tilde{A}(x_1,\hat{x}_0)=0$ for all $x_1\in (0,\eps)$. Then, denoting by $\delta_{\hat{x}_0}(\varphi)= \varphi(\hat{x}_0)$ the delta distribution and $\tilde{\Ga}=\Ga D(\eps, \hat{x}_0)\neq0 $, we have 
\[
\tilde{\Ga} \delta_{\hat{x}_0} ( F(\eps,\cdot))=- \int_0^{\eps} \Ga (D A)(\tau, \hat{x}_0) G(\tau,\hat{x}_0) \, d\tau=0.
\]
Since the vector-value Radon measure $\tilde{\Ga} \delta_{\hat{x}_0} \not\equiv 0$ annihilates the image of the holonomy map, by Lemma \ref{lm:operatorsurj} we conclude that the image of holonomy map is not a dense subspace of $\mathcal{V}(\Sigma_{\eps})$.
\end{proof}

For the reader’s convenience, in Lemma \ref{lm:intRadon} we recall  a classical result of calculus of variations, see for instance \cite[Corollary 4.24]{Brezis} or \cite[Exercise 4.14]{Maggi12}.
\begin{lemma}
\label{lm:intRadon}
Let $\Omega$ be an open subset of $\rr^n$ and $\mu$ be a Radon measure on $\Omega$. If $f:\Omega \to \rr$ is a measurable function in $L^1_{\text{loc}}(\Omega,\mu)$ such that 
\[
\int_{\Omega} f(x) h(x) d\mu(x)=0 \quad \forall h \in C_0(\Omega),
\]
then $f=0$ a.e. w.r.t. $\mu$.
\end{lemma}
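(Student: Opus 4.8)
The plan is to deduce the statement from the fact that a (locally) finite signed Borel measure is determined by its integrals against continuous compactly supported test functions, after a localisation. Since $\Omega\subset\rr^n$ is $\sigma$-compact, I would write $\Omega=\bigcup_{k\in\nn}U_k$ with each $U_k$ open and relatively compact in $\Omega$, and reduce to proving $f=0$ $\mu$-a.e.\ on a fixed $U=U_k$. Pick a cut-off $\chi\in C_c(\Omega)\subseteq C_0(\Omega)$ with $0\le\chi\le1$ and $\chi\equiv1$ on $\overline U$; since $f\in L^1_{\mathrm{loc}}(\Omega,\mu)$ and $\mu$ is finite on the compact set $\operatorname{supp}\chi$, the function $|f|\chi$ lies in $L^1(\Omega,\mu)$, so $\nu:=f\chi\,\mu$ is a finite signed Borel measure.

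Next I would show that $\nu$ vanishes on every closed subset $C\subseteq\Omega$. For $C\neq\emptyset$ put $g_j(x):=\max\{0,\,1-j\operatorname{dist}(x,C)\}$, which is continuous with $0\le g_j\le1$ and $g_j\downarrow\mathbf{1}_C$ pointwise; then $h_j:=\chi g_j\in C_c(\Omega)\subseteq C_0(\Omega)$, $h_j\to\chi\mathbf{1}_C$ pointwise, and $|f h_j|\le|f|\chi\in L^1(\Omega,\mu)$. Dominated convergence together with the hypothesis $\int_\Omega f h_j\,d\mu=0$ gives
\[
\nu(C)=\int_\Omega f\chi\,\mathbf{1}_C\,d\mu=\lim_{j\to\infty}\int_\Omega f h_j\,d\mu=0 .
\]

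Finally I would upgrade this to $\nu\equiv0$: the closed subsets of $\Omega$ form a $\pi$-system generating the Borel $\sigma$-algebra and containing $\Omega$, so the finite measures $\nu^+$ and $\nu^-$ of the Hahn--Jordan decomposition -- which satisfy $\nu^+(C)=\nu^-(C)$ for every closed $C$ by the previous step -- coincide on all Borel sets by the uniqueness theorem for finite measures (equivalently, apply the uniqueness part of the Riesz representation theorem to the finite Radon measures $\nu^\pm$, since $\int h\,d\nu^+=\int h\,d\nu^-$ for all $h\in C_c(\Omega)$). As $\nu^+\perp\nu^-$ this forces $\nu^+=\nu^-=0$, hence $|\nu|=|f|\chi\,\mu=0$; in particular $\int_U|f|\,d\mu=\int_U|f|\chi\,d\mu=0$, so $f=0$ $\mu$-a.e.\ on $U$, and letting $k$ vary finishes the proof. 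The only points requiring care are the localisation -- needed because $f$ is merely locally integrable, which is exactly why the cut-off $\chi$ is introduced -- and the monotone approximation of $\mathbf{1}_C$ by continuous functions so that dominated convergence applies; beyond that the argument is the routine uniqueness of measures tested against $C_c$, so there is no real obstacle.
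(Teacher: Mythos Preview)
Your proof is correct. Both your argument and the paper's begin with the same key step: approximating the indicator of a set by continuous functions with compact support and applying dominated convergence to pass from the hypothesis to the vanishing of $\int f\,d\mu$ over suitable sets. The difference lies in how the conclusion $f=0$ $\mu$-a.e.\ is extracted from there.

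The paper approximates indicators of \emph{compact} sets $K$ to get $\int_K f\,d\mu=0$, and then argues directly with level sets: for $E_\delta^+=\{f>\delta\}$ one has $0=\int_K f\,d\mu\ge\delta\,\mu(K)$ for every compact $K\subset E_\delta^+$, so inner regularity of the Radon measure forces $\mu(E_\delta^+)=0$; letting $\delta\downarrow0$ finishes. You instead localise with a cut-off $\chi$, approximate indicators of \emph{closed} sets, and then invoke the Hahn--Jordan decomposition together with a $\pi$-system (or Riesz uniqueness) argument to kill the finite signed measure $f\chi\,\mu$. Your route is the standard ``uniqueness of measures'' package and is cleaner conceptually; the paper's route is slightly more elementary in that it avoids signed measures altogether and uses only inner regularity, which is already part of the definition of a Radon measure. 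Both are short and neither offers a real advantage over the other here.
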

\begin{proof}
First of all we claim that for each compact set $K \subset \Omega$
\[
\int_K f(x) d\mu(x)=0.
\]
Fix a compact $K \subset \Omega$ and  consider  a sequence of continuous compactly supported functions $h_n \le 1$ on $\Omega$, $h_n \equiv 1$ on $K$, vanishing out of small open neighborhood $U$ of $K$  such that $\text{supp}(h_{n+1}) \subset \text{supp}(h_{n}) $ for each $n \in \nn$ and $h_n (x) \xrightarrow[]{n\to +\infty} \chi_K(x)$ for all $x \in \Omega$, where 
\[
\chi_K(x)=\begin{cases}
1 & \quad \text{if} \quad  x \in K\\
0 &  \quad \text{if} \quad x \in \Omega \setminus K.
\end{cases}
\] 
Since we have the pointwise convergence and $|f(x) h_n(x)|\le |f(x)|$ for each $n \in \nn$ with $f \in L^1(\text{supp}(h_1),\mu)$, by the dominated convergence theorem  we obtain 
\[
0=\int_{\Omega} f(x) h_n(x) d\mu(x) \xrightarrow[]{n\to +\infty} \int_{\Omega} f(x) \chi_K(x) d\mu(x)= \int_K f(x) d\mu(x).
\]
Let us consider $\delta >0$ and the Borel sets
\[
E_{\delta}^{+}=\{x \in \Omega \ : \ f(x)> \delta\}=f^{-1}((\delta,+\infty))
\]
and
\[
E_{\delta}^{-}=\{x \in \Omega \ : \ f(x)<- \delta\}=f^{-1}((-\infty,-\delta)).
\]
Then $\mu(E_{\delta}^{+})=\sup_{K \subset E_{\delta}^{+} } \mu(K)$ for each compact set $K  \subset E_{\delta}^{+}$.
On the other hand we  have
\[
0=\int_K f(x) d\mu(x)\ge \delta \mu(K).
\]
Therefore $\mu(K)=0$ for each $K \subset E_{\delta}^{+} $, then $\mu(E_{\delta}^{+})=0$. Hence  as $\delta \to 0$ we obtain $\mu(E^{+})=0$,
where 
\[
E^{+}=\{x \in \Omega \ : \ f(x)> 0\}.
\]
A similar argument prove that  $\mu(E^{-})=0$, where $E^{-}=\{x \in \Omega \ : \ f(x)< 0\}$.
\end{proof}

The following result provides a useful characterization of non-regularity
\begin{theorem}
\label{th:singchar}
The immersion $\Phi$ restricted to $\bar{\Omega}_{\eps}$ is non-regular if and only if there exist a point $\hat{x}_0 \in \Sigma_0$ and a row vector field $\Lambda(x_1,\hat{x}_0)\ne0$  for all $x_1 \in [0,\eps]$  that solves the following system
\begin{equation}
 \begin{cases}
 \partial_{x_1} \Lambda(x_1,\hat{x}_0)= \Lambda(x_1,\hat{x}_0) B(x_1,\hat{x}_0)\\
 \Lambda(x_1,\hat{x}_0) A(x_1,\hat{x}_0)=0.
 \end{cases}
 \label{eq:singularsys}
\end{equation}
\end{theorem}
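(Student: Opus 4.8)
\emph{The plan.} The theorem will follow by combining Proposition~\ref{prop:linearlyfull} with the change of unknown $\Lambda=\Gamma\,D$, where $D=D(x_1,\hat x)$ is the integrating-factor matrix produced in Proposition~\ref{prop:inthol}. First I would restate non-regularity in purely finite-dimensional terms. By Proposition~\ref{prop:linearlyfull}, $\Phi|_{\bar\Omega_\eps}$ is non-regular if and only if $\tilde A=DA$ fails to be linearly full, which, unwinding the definition, means: there is a point $\hat x_0\in\Sigma_0$ for which $\text{span}\{\tilde A^i(x_1,\hat x_0):1\le i\le k,\ x_1\in(0,\eps)\}$ is a \emph{proper} subspace of $\rr^{n-m-k}$. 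Passing to orthogonal complements, this is equivalent to the existence of a nonzero row vector $\Gamma\in\rr^{n-m-k}$ (depending on $\hat x_0$) such that $\Gamma\,\tilde A(x_1,\hat x_0)=0$ for every $x_1\in(0,\eps)$. So it remains to match such pairs $(\hat x_0,\Gamma)$ with solutions of \eqref{eq:singularsys}.

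Next I would exploit the matrix $D$. By Proposition~\ref{prop:inthol} and Lemma~\ref{lm:det}, for each fixed $\hat x_0$ the matrix $D(\cdot,\hat x_0)$ solves $\partial_{x_1}D=DB$ with $D(0,\hat x_0)=I$ and is invertible on all of $[0,\eps]$. Given a nonzero row vector $\Gamma$, set $\Lambda(x_1,\hat x_0):=\Gamma\,D(x_1,\hat x_0)$; then $\partial_{x_1}\Lambda=\Gamma\,\partial_{x_1}D=\Gamma DB=\Lambda B$, and $\Lambda(x_1,\hat x_0)\ne0$ for every $x_1\in[0,\eps]$ because $\Gamma\ne0$ and $D(x_1,\hat x_0)$ is invertible. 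Conversely, starting from any solution $\Lambda(\cdot,\hat x_0)$ of $\partial_{x_1}\Lambda=\Lambda B$, I would differentiate $\Lambda D^{-1}$: since $\partial_{x_1}(D^{-1})=-D^{-1}(\partial_{x_1}D)D^{-1}=-BD^{-1}$, one gets $\partial_{x_1}(\Lambda D^{-1})=(\Lambda B)D^{-1}-\Lambda BD^{-1}=0$, so $\Lambda=\Gamma D$ with the constant row vector $\Gamma:=\Lambda(0,\hat x_0)$, and $\Gamma\ne0$ precisely when $\Lambda\not\equiv0$. Under this correspondence $\Lambda A=\Gamma DA=\Gamma\tilde A$, hence the remaining equation $\Lambda A=0$ of \eqref{eq:singularsys} is equivalent to $\Gamma\tilde A(\cdot,\hat x_0)\equiv0$, the passage between the open interval $(0,\eps)$ appearing in the definition of linear fullness and the closed interval $[0,\eps]$ of \eqref{eq:singularsys} being harmless by continuity of $A$, $B$ and $D$.

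Putting the pieces together: $\Phi|_{\bar\Omega_\eps}$ non-regular $\iff$ (Proposition~\ref{prop:linearlyfull}) $\tilde A$ not linearly full $\iff$ there exist $\hat x_0\in\Sigma_0$ and $\Gamma\ne0$ with $\Gamma\tilde A(x_1,\hat x_0)=0$ for all $x_1$ $\iff$ (via $\Lambda=\Gamma D$) there exist $\hat x_0\in\Sigma_0$ and a nowhere-vanishing row vector field $\Lambda(\cdot,\hat x_0)$ on $[0,\eps]$ solving \eqref{eq:singularsys}. This is exactly the assertion of the theorem.

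\emph{Expected main obstacle.} There is no genuine analytic difficulty: once one notices that $D$ turns the linear equation $\partial_{x_1}\Lambda=\Lambda B$ into $\partial_{x_1}(\Lambda D^{-1})=0$, everything reduces to linear algebra and the cited results. The two points that require a little care are (a) reading the definition of ``linearly full'' correctly as a non-triviality statement about the orthogonal complement of a span, so that the covector $\Gamma$ — and hence $\Lambda$ — enters the picture, and (b) the bookkeeping between $(0,\eps)$ and $[0,\eps]$ for the nonvanishing condition on $\Lambda$, which is settled by Lemma~\ref{lm:det} since $D$ is invertible on the whole closed interval.
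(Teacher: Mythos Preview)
Your proposal is correct and follows essentially the same approach as the paper: both reduce the statement to Proposition~\ref{prop:linearlyfull} and then use the substitution $\Lambda=\Gamma D$ with the integrating-factor matrix $D$ from Proposition~\ref{prop:inthol} and Lemma~\ref{lm:det}. The only cosmetic difference is in the converse direction, where you show $\Lambda D^{-1}$ is constant by differentiating $D^{-1}$, while the paper sets $\Psi=\Lambda-\Gamma D$ and invokes uniqueness of solutions to linear ODEs with zero initial data; these are equivalent standard arguments.
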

\begin{proof}
 
Assume that $\Phi$ restricted to $\bar{\Omega}_{\eps}$ is non-regular, then by Proposition~\ref{prop:linearlyfull} there exist a point $\hat{x}_0 \in \Sigma_0$ and  a row vector $\Gamma \ne 0$ such that 
\[
 \Gamma D(x_1,\hat{x}_0) A(x_1,\hat{x}_0)=0 
\] 
for all $x_1\in [0,\eps]$, where $D(x_1,\hat{x}_0)$ solves 
\begin{equation}
\label{eq:homD}
\begin{cases}
\partial_{x_1} D= D B\\
D(0,\hat{x}_0)=I_{n-m-k}.
\end{cases}
\end{equation}
Since $\Gamma$ is a constant vector and $D(x_1,\hat{x}_0)$ is a regular matrix by Lemma~\ref{lm:det} , $\Lambda(x_1,\hat{x}_0):=\Gamma D(x_1,\hat{x}_0)$ solves the system \eqref{eq:singularsys} and  $\Lambda(x_1,\hat{x}_0) \ne 0$ for all $x_1 \in [0,\eps]$.

Conversely, any solution of the system \eqref{eq:singularsys} is given by 
\[
\Lambda(x_1,\hat{x}_0)= \Gamma D(x_1,\hat{x}_0),
\]
where $\Gamma=\Lambda(0,\hat{x}_0)\ne 0$ and $D(x_1,\hat{x}_0)$ solves the equation \eqref{eq:homD}.
Indeed, let us consider a general solution $\Lambda(t,\hat{x}_0)$ of \eqref{eq:singularsys}. If we set
\[
\Psi_{\hat{x}_0}(t)=\Lambda(t,\hat{x}_0)-\Gamma D(t,\hat{x}_0),
\]
where $\Gamma=\Lambda(0,\hat{x}_0)\ne 0$ and $D(t,\hat{x}_0)$ solves the equation \eqref{eq:homD}, then we deduce 
\[
\begin{cases}
\partial_t \Psi_{\hat{x}_0}(t)=\Psi _{\hat{x}'}(t)B(t,\hat{x}_0)\\
\Psi_{\hat{x}_0}(0)=0.
\end{cases}
\]
Clearly the unique solution of this system is $\Psi_{\hat{x}_0}(t)\equiv 0$.
Hence we  conclude that $\Gamma \tilde{A}(x_1,\hat{x}_0)=0$. Thus $\tilde{A}(x_1,\hat{x}_0)$  is not fully linear and by Proposition~\ref{prop:linearlyfull}  we are done.
\end{proof}

\section{Integrability of admissible vector fields for a ruled regular submanifold}
\label{sc:intheo}
In this section we deduce the main result Theorem~\ref{thm:intcriterion}. As we pointed out in the Introduction we need that the space of  simple $m$-vectors of degree grater than $\deg(M)$ is quite simple. Therefore we give the following definition.

\begin{definition}
\label{def:FTG}
We say that a $m$-dimensional ruled immersion, see Definition \ref{def:ruledimm}, $\Phi: \bar{M} \to N$ into an equiregular graded manifold $(N,\mh^1,\ldots,\mh^s)$
\begin{enumerate}
\item[(i)]  \emph{fills the grading from the top} if $n_{s}-n_{s-1}=m-1$,  where $n_s=\dim(\mh^s)$ and $n_{s-1}=\dim(\mh^{s-1})$;
\item[(ii)] is \emph{foliated by curves} of degree grater than or equal to $s-3$ if $ \iota_0\ge s-3$.
\end{enumerate} 
A ruled submanifold verifying $(i)$ and $(ii)$ will be called a \emph{FGT-$(s-3)$} ruled submanifold and in this case $(ii)$ is equivalent to
\begin{equation}
\label{eq:restons}
s-3\le \iota_0\le s-1.
\end{equation}

\end{definition}

\begin{remark}
\label{rk:mdvectors}
Since $n_{s}-n_{s-1}=m-1$ and the condition \eqref{eq:restons} holds we have that the only simple $m$-vectors of degree strictly grater than $\deg(M)$ are
\[
V_i \wedge E_2 \wdw E_m
\]
for $i=m+k+1,\ldots,n$. When $\iota_0=s-1$ the submanifold has maximum degree therefore all vector fields are admissible, thus there are no singular submanifold.
\end{remark} 
Keeping the previous notation we now consider the following spaces
\begin{enumerate}
\item $ \quad \displaystyle \mathcal{H}(\Sigma_0)=\left\{ Y_{\mh}=\sum_{i=m+1}^{m+k} g_i V_i \ : \  g_i \in  C(\bar{\Omega}_{\eps}), T_{\Sigma_0}(g_i)=0 \right\} $ where the norm is given by 
\[
\| Y_{\mh} \|_{\infty}:= \max_{i=m+1,\ldots,m+k}\sup_{x \in \bar{\Omega}_{\eps} } |g_i |
\]
\item $ \quad  \displaystyle \mathcal{V}^1(\Sigma_0)=\left\{ Y_{\mathcal{V}}=\sum_{i=m+k+1}^{n} f_i V_i \ : \  \partial_{x_1} f_i \in C( \bar{\Omega}_{\eps}), f_i \in  C( \bar{\Omega}_{\eps}), T_{\Sigma_0}(f_i)=0 \right\},$  where the norm is given by 
\[
\| Y_{\mathcal{V}} \|_1:= \max_{i=m+k,\ldots,n}( \sup_{x \in \bar{\Omega}_{\eps} }  |f_i |+\sup_{x \in \bar{\Omega}_{\eps} } |\partial_{x_1} f_i | )
\]
\item $\Lambda ( \Sigma_0)$ is the set of elements given by
\[
\sum_{i=m+k+1}^{n}  z_{i}(x_1,\ldots,x_m) \  V_i \wedge E_2 \wdw E_m \ 
\]
where  $ z_{i} \in C( \bar{\Omega}_{\eps})$ vanishing on $\Sigma_0$.
\end{enumerate}
We denote by ${\Pi}_d$ the orthogonal projection over the space $\Lambda(\Sigma_0)$, that is the bundle over the vector space of simple $m$-vectors of degree strictly grater than $d$, thanks to Remark \ref{rk:mdvectors}. Then we set
\begin{equation}
\label{eq:defG}
{\mathcal{G}}:\mathcal{H}(\Sigma_0)\times \mathcal{V}^1(\Sigma_0) \to \mathcal{H}(\Sigma_0)\times {\Lambda}(\Sigma_0),
\end{equation}
defined by
\[
{\mathcal{G}}(Y_1,Y_2)=(Y_1,{\mathcal{F}}(Y_1+Y_2)),
\]
where
 \begin{equation}
 \label{eq:mathcalF}
{\mathcal{F}}(Y)={\Pi}_d \left( d\Gamma(Y)(E_1) \wedge \ldots \wedge d\Gamma(Y)(E_m)\right)
\end{equation}
and $\Gamma(Y)(q)=\exp_{q}(Y_q)$ for each $q \in O$. The open set $O$ is defined in Section~\ref{sc:admsys} and here $\exp$ denotes the Riemannian exponential map  defined by means of the geodesic flow on $TN$ induced by the Riemannian metric $\escpr{\cdot,\cdot}$ (see \cite[Chapter 3]{Carmo}). In equation \eqref{eq:mathcalF} we consider $E_j$ for each $j=1,\ldots,m$ as vector fields restricted to $O$ (to be exact we should use $\tilde{E}_j$ following the notation introduced in Section \ref{sc:admsys}) and $d\Gamma(Y)$ denotes the differential of $\Gamma(Y)$. Thanks to the diffeomorphism $\Xi$ defined in Section~\ref{sc:ruledsub} we can read the map $\mathcal{F}(Y)$ and the variation $\Gamma(Y)$ in exponential coordinates of the second kind $(x_1,x_2,\ldots,x_m)$ where the open cylinder $\Omega_{\eps}$ lives.

Observe that now ${\mathcal{F}}(Y)=0$ implies that the degree of the variation $\Gamma(Y)$ is less than or equal to $d$. Then
\[
D \mathcal{G} (0,0)(Y_1,Y_2)=(Y_1,D \mathcal{F}(0)(Y_1+Y_2)),
\]
where $D  \mathcal{F}(0)Y$ is given by
\[
\dfrac{d}{dt}\Big|_{t=0} \mathcal{F}(0+tY)=\sum_{i=m+k+1}^{n} \dfrac{d}{dt}\Big|_{t=0} \lambda_i (t)  \,V_i\wedge E_2 \wdw  E_m,
\]
with 
\begin{align*}
\dfrac{d}{dt}\Big|_{t=0}\lambda_i (t) =& \dfrac{d}{dt}\Big|_{t=0} \escpr{ d\Gamma(t Y)(E_1) \wedge \ldots \wedge d\Gamma(t Y)(E_m) ,(V_i\wedge E_2 \wdw  E_m)_{\Gamma(tY)}}\\
=&\sum_{j=1}^n \escpr{E_1\wdw \nabla_{E_j} Y \wdw E_m, V_i\wedge E_2 \wdw  E_m}+\\
&+\escpr{E_1\wdw E_m, \nabla_{Y} (V_i\wedge E_2 \wdw  E_m) },
\end{align*}
that is the right hand side of the equation \eqref{eq:system of PDEs for admissible}.
Therefore, following the computations developed  in Section~\ref{sc:ruledsub}  and using the exponential coordinates of the second kind we have  
\[
D \mathcal{F}(0) Y=\sum_{i=m+k+1}^{n} \Big(  \dfrac{\partial f_i(x)}{\partial x_1} +\sum_{r=m+k}^n b_{i1r} f_r+ \sum_{h=m+1}^{m+k}a_{i1h} g_h\Big) V_i\wedge E_2 \wdw  E_m.
\]
on $\Omega_{\eps} \subset \Xi(O) $, defined in \eqref{eq:cylinder}.
Observe that $D \mathcal{F}(0)Y=0$ if and only if $Y$ is an admissible vector field, namely $Y$ solves \eqref{eq:ruledadm2}. Moreover, we have that $A$ and $B$ are bounded the supremum norm on $\Omega_{\eps}$, since they are continuous  on $\Xi(O)$ and bounded on the compact $\bar{\Omega}_{\eps}$.\\
Our objective now is to prove that the map $D \mathcal{G}(0,0)$ is an isomorphism of Banach spaces. To show this, we shall need the following result.

\begin{proposition}
\label{prop:Gisomor}
The differential $D \mathcal{G}(0,0)$ is an isomorphism of Banach spaces.
\end{proposition}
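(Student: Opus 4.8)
The plan is to read $D\mathcal{G}(0,0)$ in the exponential coordinates of the second kind, where it becomes a linear Cauchy problem for an ODE system in the variable $x_1$, and to invert it by the same integrating-factor argument used in Proposition~\ref{prop:inthol}. First I would record that $D\mathcal{G}(0,0)$ is bounded: its first component is the identity on $\mathcal{H}(\Sigma_0)$, and the second component $D\mathcal{F}(0)(Y_1+Y_2)$ involves only the multiplication operators by the entries of $A$ and $B$ and the derivation $\partial_{x_1}$, all of which are bounded from $\mathcal{H}(\Sigma_0)\times\mathcal{V}^1(\Sigma_0)$ into $\Lambda(\Sigma_0)$ because $A$ and $B$ are continuous on $\Xi(O)$ and hence bounded on the compact set $\bar{\Omega}_{\eps}$. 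Since the first component is already the identity, proving that $D\mathcal{G}(0,0)$ is a bijection reduces to showing that for every $Y_1=\sum_{h=m+1}^{m+k} g_h V_h\in\mathcal{H}(\Sigma_0)$ and every $Y_3=\sum_{i=m+k+1}^{n} z_i\, V_i\wedge E_2\wdw E_m\in\Lambda(\Sigma_0)$ there is a unique $Y_2=\sum_{r=m+k+1}^{n} f_r V_r\in\mathcal{V}^1(\Sigma_0)$ with $D\mathcal{F}(0)(Y_1+Y_2)=Y_3$; written with the column vectors $F,G$ of \eqref{def:FG} and $Z=(z_{m+k+1},\dots,z_n)^{\top}$, this is exactly the linear system
\[
\partial_{x_1}F=-B\,F-A\,G+Z \quad\text{on }\Omega_{\eps},\qquad F(0,\hat{x})=0,
\]
the last condition coming from $T_{\Sigma_0}(f_r)=0$; note that the compatibility $z_i(0,\hat{x})=0$ built into $\Lambda(\Sigma_0)$ is consistent with this, since evaluating the system at $x_1=0$ forces $\partial_{x_1}f_r(0,\hat{x})=0$.

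Next I would settle injectivity and existence simultaneously. For injectivity, $D\mathcal{G}(0,0)(Y_1,Y_2)=0$ gives $Y_1=0$ and then $F$ solves the homogeneous problem $\partial_{x_1}F=-BF$, $F(0,\cdot)=0$, whose only solution is $F\equiv 0$ (either directly, or from the invertibility of the fundamental matrix of Lemma~\ref{lm:det}); hence $Y_2=0$. For existence, let $D(x_1,\hat{x})$ be the matrix of Lemma~\ref{lm:det}, so $\partial_{x_1}D=DB$, $D(0,\cdot)=I$, and $D$ is invertible on $[0,\eps]\times\Sigma_0$. Exactly as in the proof of Proposition~\ref{prop:inthol}, the system is equivalent to $\partial_{x_1}(DF)=D(Z-AG)$, so integrating from $0$ and using $F(0,\cdot)=0$ yields the explicit solution
\[
F(x_1,\hat{x})=D(x_1,\hat{x})^{-1}\int_0^{x_1} D(\tau,\hat{x})\bigl(Z(\tau,\hat{x})-A(\tau,\hat{x})G(\tau,\hat{x})\bigr)\,d\tau .
\]

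I would then verify that the $Y_2$ so produced lies in $\mathcal{V}^1(\Sigma_0)$ and that the correspondence is bounded. Since $D$, $D^{-1}$, $A$ are continuous on $\bar{\Omega}_{\eps}$ (being continuous on $\Xi(O)$, with $D$ the solution of a linear ODE with continuous coefficients depending continuously on the parameter $\hat{x}$), the integral depends continuously on $(x_1,\hat{x})$, hence $F\in C(\bar{\Omega}_{\eps})$; moreover $\partial_{x_1}F=-BF-AG+Z\in C(\bar{\Omega}_{\eps})$, and $F(0,\hat{x})=0$ by construction, so $Y_2\in\mathcal{V}^1(\Sigma_0)$ and $D\mathcal{F}(0)(Y_1+Y_2)=Y_3$. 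Finally, from the explicit formula and the boundedness of $D,D^{-1},A,B$ on $\bar{\Omega}_{\eps}$ one gets $\|F\|_{\infty}\le C(\|G\|_\infty+\|Z\|_\infty)$ and then $\|\partial_{x_1}F\|_\infty\le C'(\|G\|_\infty+\|Z\|_\infty)$, so that $\|Y_2\|_1\le C''(\|Y_1\|_\infty+\|Y_3\|_\infty)$; thus the inverse of $D\mathcal{G}(0,0)$ is bounded (alternatively, once bijectivity is known one may simply invoke the open mapping theorem, all the spaces involved being Banach). The only delicate point — really careful bookkeeping rather than a genuine obstacle — is the passage to the closed cylinder $\bar{\Omega}_{\eps}$: one must check that $F$ and $\partial_{x_1}F$ extend continuously up to the boundary and depend continuously on the data in the stated norms, which follows from continuous dependence of solutions of linear ODEs on parameters and on the right-hand side.
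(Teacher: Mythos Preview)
Your proof is correct and follows essentially the same route as the paper: reduce $D\mathcal{G}(0,0)$ to the linear Cauchy problem $\partial_{x_1}F=-BF-AG+Z$, $F(0,\cdot)=0$, and read off injectivity, surjectivity and continuity from the ODE. The only cosmetic difference is that you extract the bound on the inverse from the explicit integrating-factor formula of Proposition~\ref{prop:inthol}/Lemma~\ref{lm:det}, whereas the paper packages the same estimate via a separate Gronwall-type lemma (Lemma~\ref{lem:odeestimate}, based on Lemma~\ref{lm:ODE ineq}); both arguments yield the same inequality $\|Y_2\|_1\le C(\|Y_1\|_\infty+\|Y_3\|_\infty)$.
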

\begin{proof}
We first observe that $D \mathcal{G}(0,0)$ is injective, since $D \mathcal{G}(0,0)(Y_1,Y_2)=(0,0)$ implies that $Y_1=0$ and that the vertical vector field $Y_2$ satisfies the compatibility equations with initial condition $Y_2(0,\hat{x})=0$ for each $\hat{x} \in \Sigma_0$. Hence $Y_2=0$. The map $D \mathcal{G}(0,0)$ is continuous.
Indeed, if for instance we consider the $1$-norm on the product space we have
\begin{align*}
 \| D \mathcal{G}(0,0)(Y_1, Y_2) \|&= \| (Y_1, D\mathcal{F}(0)(Y_1+Y_2)) \|\\
                        &\le \| Y_1 \|_{\infty} + \|D \mathcal{F}(0)(Y_1+Y_2)) \|_{\infty}\\
                        &\le (1+ \|(a_{hij}) \|_{\infty})\|Y_1\|_{\infty}+ (1+\|(b_{rij}) \|_{\infty})\|Y_2 \|_{1}.\\                        
\end{align*} 

To show that $D \mathcal{G}(0,0)$ is surjective, we take $(Y_1,Y_2)$ in the image, and we find a vector field $Y$ on $\Omega_{\eps}$ such that $Y_{\mh}=Y_1$, $D\mathcal{F}(0)(Y)=Y_2$ and $Y_{\mathcal{V}}(0,\hat{x})=0$. The map $D \mathcal{G}(0,0)$ is open because of the estimate \eqref{eq:DGopen} given in Lemma~\ref{lem:odeestimate} below.
\end{proof}

\begin{lemma}
\label{lem:odeestimate}
In the above conditions, assume that $D\mathcal{F}(0)(Y)=Y_2$ and $Y_{\mh}=Y_1$ and $Y(a)=0$. Then there exists a constant $K$ such that 
\begin{equation}
\label{eq:DGopen}
 \| Y_{\mathcal{V}} \|_{1} \le K (\|  Y_2 \|_{\infty}+ \| Y_1  \|_{\infty})
\end{equation}
\end{lemma}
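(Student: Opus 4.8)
The statement to prove is the a priori estimate \eqref{eq:DGopen} for the solution $Y_{\mathcal{V}}$ of the admissibility ODE \eqref{eq:ruledadm2} with vanishing initial data on $\Sigma_0$, in terms of the horizontal datum $Y_1$ (i.e. the control $G$) and the inhomogeneity $Y_2$ (i.e. the right-hand side $D\mathcal{F}(0)(Y)$). The plan is to treat \eqref{eq:ruledadm2} as a linear ODE in the variable $x_1$, with $\hat{x}\in\Sigma_0$ a parameter, and to apply a Gronwall-type argument uniformly in $\hat{x}$. Writing the components of $Y_{\mathcal{V}}$ as the vector $F=F(x_1,\hat{x})$, the components of $Y_1$ as $G$, and reading $Y_2$ in exponential coordinates of the second kind, the hypothesis $D\mathcal{F}(0)(Y)=Y_2$ together with the explicit formula for $D\mathcal{F}(0)$ computed just before Proposition~\ref{prop:Gisomor} says precisely that
\[
\partial_{x_1} F(x_1,\hat{x}) = -B(x_1,\hat{x})\,F(x_1,\hat{x}) - A(x_1,\hat{x})\,G(x_1,\hat{x}) + Z(x_1,\hat{x}),
\]
where $Z$ collects the components of $Y_2$ and $\|Z\|_\infty\le\|Y_2\|_\infty$. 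Since $A$ and $B$ are continuous on $\Xi(O)$ and hence bounded on the compact $\bar{\Omega}_{\eps}$, set $M_A:=\|A\|_\infty$, $M_B:=\|B\|_\infty$.

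First I would estimate $|F|$ itself. Integrating in $x_1$ from $0$ to $x_1\le\eps$ and using $F(0,\hat{x})=0$ gives
\[
|F(x_1,\hat{x})| \le \int_0^{x_1}\!\big( M_B\,|F(\tau,\hat{x})| + M_A\,\|Y_1\|_\infty + \|Y_2\|_\infty\big)\,d\tau
\le \eps\,(M_A\|Y_1\|_\infty+\|Y_2\|_\infty) + M_B\!\int_0^{x_1}\!|F(\tau,\hat{x})|\,d\tau,
\]
so Gronwall's inequality (applied for each fixed $\hat{x}$, with constants independent of $\hat{x}$) yields $\sup_{\bar{\Omega}_{\eps}}|F|\le \eps\,e^{M_B\eps}(M_A\|Y_1\|_\infty+\|Y_2\|_\infty)$. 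Next, the bound on $\partial_{x_1}F$ is immediate from the ODE itself: $|\partial_{x_1}F|\le M_B\sup|F| + M_A\|Y_1\|_\infty+\|Y_2\|_\infty$, which is again controlled by a constant times $(\|Y_1\|_\infty+\|Y_2\|_\infty)$. Combining the two bounds and recalling the definition of $\|\cdot\|_1$ as the max over components of the sup of $|f_i|$ plus the sup of $|\partial_{x_1}f_i|$ gives \eqref{eq:DGopen} with an explicit $K$ depending only on $\eps$, $M_A$, $M_B$, and the number $n-m-k$ of components.

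The only real point requiring care — and the step I expect to be the mild obstacle — is the bookkeeping that identifies the hypothesis $D\mathcal{F}(0)(Y)=Y_2$ with the inhomogeneous system displayed above in the correct coordinates: one must invoke the diffeomorphism $\Xi$ of Section~\ref{sc:ruledsub} so that $d\Xi(E_1)=\partial_{x_1}$, check that the componentwise reading of $Y_2\in\Lambda(\Sigma_0)$ against the frame $V_i\wedge E_2\wdw E_m$ produces exactly the scalar functions $z_i$, and note that $Y_{\mh}=Y_1$ fixes the $g_h$'s appearing in $A(x)G(x)$. Once this translation is in place the estimate is the standard linear-ODE Gronwall argument carried out with all constants uniform in the parameter $\hat{x}$, which is legitimate precisely because $A,B$ are bounded on the compact set $\bar{\Omega}_{\eps}$ and $\eps$ is finite.
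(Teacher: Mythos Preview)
Your proposal is correct and follows essentially the same approach as the paper: translate the hypothesis $D\mathcal{F}(0)(Y)=Y_2$ into the inhomogeneous linear ODE $\partial_{x_1}F=-BF-AG+Z$ with zero initial data, then apply a Gronwall-type bound (uniform in the parameter $\hat{x}$ because $A,B$ are bounded on $\bar{\Omega}_{\eps}$) to control $\|F\|_\infty$, and finally read off the bound on $\|\partial_{x_1}F\|_\infty$ directly from the equation. The only organizational difference is that the paper factors the Gronwall step into a separate auxiliary lemma (Lemma~\ref{lm:ODE ineq}, invoking \cite[Lemma~4.1]{Hartman}), whereas you carry it out inline; the mathematical content is the same.
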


\begin{proof}
We write 
\[
Y_1=\sum_{h=m+1}^{m+k} g_h V_h, \quad Y_2=\sum_{i=k+1}^n z_i \, V_i \wedge E_2 \wdw E_m \quad \text{and} \quad Y_{\mathcal{V}}=\sum_{r=k+1}^n f_r V_r.
\]
Then $Y_v$  is a solution of the ODE given  by
\begin{equation}
\label{eq:ODEprop}
 \partial_{x_1} F(x_1,\hat{x})=-B(x)F(x_1,\hat{x})+ Z(x_1,\hat{x})- A(x) G(x_1,\hat{x})
\end{equation}
where $B(x), A(x)$ are defined after \eqref{def:FG}, $F$, $G$ are defined in \eqref{def:FG} and we set 
\[
Z=\begin{pmatrix} z_{m+k+1} \\ \vdots \\ z_n \end{pmatrix}.
\]
Since $Y_{\mathcal{V}}(0,\hat{x})=0$ an $Y_{\mathcal{V}}$ solves \eqref{eq:ODEprop} in $(0,\eps)$, by Lemma \ref{lm:ODE ineq} there exists a constant $K$ such that 
\begin{equation}
\begin{aligned}
 \| Y_{\mathcal{V}} \|_{1}=\| F \|_{1} &\le K \|  Z(x)- A(x) \ G(x)  \|_{\infty}\\
               &\le \tilde{K} (\|  Y_2 \|_{\infty}+ \| Y_1  \|_{\infty}).
\end{aligned}
\end{equation}
where $\tilde{K}=K\max\{1, \| A(x)\|_{\infty}\}$.
\end{proof}
\begin{lemma}
\label{lm:ODE ineq}
Let $E$ be an open set of $\rr^{m-1}$. Let $u:[0,\eps] \times E \to \rr^d$ be the solution  of the inhomogeneous problem 
\begin{equation}
  \begin{cases}
  u'(t, \lambda)= A(t, \lambda) u(t,\lambda)+ c(t, \lambda),\\
  u(0,\lambda)=u_0( \lambda)
  \end{cases}
  \label{eq:inhsystem}
\end{equation}
where $A(t,\lambda)$ is a  $d\times d $ continuos matrix, bounded in the supremum norm on $[0,\eps] \times E$ and $c(t,\lambda)$ a continuos vector field bounded in the supremum norm  on $[0,\eps] \times E$. We denote by $u'$ the partial derivative $\partial_t u$ . Then,  there exists a constant $K$ such that 
 \begin{equation}
  \| u \|_{1}:= \| u \|_{\infty}+\|u'\|_{\infty} \leqslant K (\| c \|_{\infty}+ |u_0|_{\infty}).
  \label{dis:lemma ineq}
 \end{equation} 
\end{lemma}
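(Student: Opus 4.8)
The estimate \eqref{dis:lemma ineq} is a standard Gr\"onwall-type bound for linear ODEs with parameters, and the natural route is: first bound $\|u\|_\infty$ using the integral form of \eqref{eq:inhsystem}, then feed that bound back into the equation itself to control $\|u'\|_\infty$. The parameter $\lambda\in E$ plays no active role — every estimate is carried out at fixed $\lambda$ and the constants produced depend only on the uniform (supremum) bounds of $A$ and $c$ over $[0,\eps]\times E$, so the final constant $K$ is uniform in $\lambda$.

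First I would write the solution in integral form: for each fixed $\lambda$,
\[
u(t,\lambda)=u_0(\lambda)+\int_0^t\bigl(A(\sigma,\lambda)u(\sigma,\lambda)+c(\sigma,\lambda)\bigr)\,d\sigma.
\]
Taking norms and setting $a:=\sup_{[0,\eps]\times E}\|A\|$ and $\|c\|_\infty:=\sup_{[0,\eps]\times E}|c|$, we get
\[
|u(t,\lambda)|\le |u_0|_\infty+\eps\,\|c\|_\infty+a\int_0^t |u(\sigma,\lambda)|\,d\sigma.
\]
Gr\"onwall's inequality then yields $|u(t,\lambda)|\le\bigl(|u_0|_\infty+\eps\|c\|_\infty\bigr)e^{a\eps}$ for all $(t,\lambda)\in[0,\eps]\times E$, hence $\|u\|_\infty\le C_1(|u_0|_\infty+\|c\|_\infty)$ with $C_1:=\max\{1,\eps\}e^{a\eps}$.

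Next I would bound the derivative directly from the differential equation: $|u'(t,\lambda)|\le a\,|u(t,\lambda)|+|c(t,\lambda)|\le a\|u\|_\infty+\|c\|_\infty$, so using the bound just obtained, $\|u'\|_\infty\le (aC_1+1)(|u_0|_\infty+\|c\|_\infty)$. Adding the two estimates gives
\[
\|u\|_1=\|u\|_\infty+\|u'\|_\infty\le K\,(|u_0|_\infty+\|c\|_\infty),
\qquad K:=C_1+aC_1+1,
\]
which is precisely \eqref{dis:lemma ineq}. The only mild subtlety — and the one point worth stating carefully — is that the constant $K$ depends on $\eps$ and on the global sups of $A$ and $c$ but \emph{not} on $\lambda$, so a single $K$ works for the whole family; this is what makes the lemma usable in Lemma~\ref{lem:odeestimate}, where $\lambda$ ranges over the leaf coordinates $\hat{x}\in\Sigma_0$. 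There is no serious obstacle here: the argument is a textbook Gr\"onwall estimate, and the care is entirely in tracking the uniformity of constants in the parameter.
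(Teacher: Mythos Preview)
Your proposal is correct and follows essentially the same route as the paper: obtain a Gr\"onwall-type bound on $\|u\|_\infty$ (the paper cites \cite[Lemma~4.1]{Hartman}, which is precisely the integral inequality you derive by hand), then feed this bound back into the differential equation to control $\|u'\|_\infty$, noting throughout that the constants depend only on $\eps$ and the uniform sup of $A$ and not on $\lambda$. The only cosmetic difference is in the explicit form of the constant $C_1$; your $C_1=\max\{1,\eps\}e^{a\eps}$ is in fact slightly more careful than the paper's.
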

\begin{proof}
We start from the case $r=1$. By \cite[Lemma 4.1] {Hartman} it follows 
\[
 u(t,\lambda)\leqslant \left(|u_0(\lambda)|+ \int_{0}^{t} |c(s,\lambda)| ds \right) e^{|\int_{0}^{t} \|A(s,\lambda) \| ds |},
\]
for each $\lambda \in E$ and where the norm of $A$ is given by $\sup_{|x|=1}|A \ x|$.
Therefore we have 
\begin{equation}
\label{dis:u}
 \sup_{t \in [0,\eps]} \sup_{\lambda \in E} |u(t,\lambda)| \le C_1  (\sup_{t \in [0,\eps]}  \sup_{\lambda \in E}  |c(t,\lambda)|+  \sup_{\lambda \in E} |u_0(\lambda)|) ,
\end{equation}
where we set
\[
 C_1=\eps e^{\eps \sup_{t \in [0,\eps] }\sup_{\lambda \in E} \|A(t, \lambda) \| }.
\]
Since $u$ is a solution of \eqref{eq:inhsystem} it follows 
\begin{equation}
\begin{aligned}
 \sup_{t \in [0,\eps]} \sup_{\lambda \in E} |u'(t,\lambda)|&\le \sup_{t \in [0,\eps]} \sup_{\lambda \in E} \|A(t,\lambda) \| \sup_{t \in [0,\eps]} \sup_{\lambda \in E} |u(t,\lambda)|+  \sup_{t \in [0,\eps]} \sup_{\lambda \in E} |c(t,\lambda)|\\
 &\le ( C_2 + 1) \sup_{t \in [0,\eps]} \sup_{\lambda \in E} |c(t,\lambda)|.\\
\end{aligned}
\label{dis:u'}
\end{equation}
Hence by \eqref{dis:u} and \eqref{dis:u'} we obtain 
\[
 \| u \|_1 \le K (\| c \|_{\infty} +\|u_0\|_{\infty}).
 \qedhere
\]
\end{proof}
Finally, we use the previous constructions to give a criterion for the integrability of admissible vector fields along a horizontal curve.

\begin{theorem}
\label{thm:intcriterion}
Let $\Phi: \bar{M} \to N$ be a ruled FGT-$(s-3)$  immersion into an equiregular graded manifold $(N,\mathcal{H}^1,\ldots,\mh^s)$ such that $\deg(M)=(m-1)s +\iota_0$, where $m=\dim(\bar{M})$, and $(i)$ and $(ii)$ in \ref{def:FTG}  hold. Let $\Omega_{\eps}=\{(x_1,\hat{x}) \ : \ 0< x_1<\eps, \ \hat{x} \in  \Sigma_0 \}$ with $\Sigma_0$ defined in \eqref{eq:Sigma0}. Assume that $\Phi$ is regular on the compact $\bar{\Omega}_{\eps}$. Then every admissible vector field with compact support in $\Omega_{\eps}$ is integrable.
\end{theorem}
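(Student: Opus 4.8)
The plan is to exhibit $V$ as the variational vector field of an admissible variation obtained by bending the Riemannian exponential variation $q \mapsto \exp_q(\tau V_q)$. Writing $V = V^{\top} + V^{\perp}$, by \cite[Proposition 5.5]{2019arXiv190505131C} admissibility only involves $V^{\perp} = V_{\mh} + V_{\mathcal{V}}$, the horizontal and vertical parts, both compactly supported in $\Omega_{\eps}$; in the notation of Section~\ref{sc:intheo} admissibility reads $D\mathcal{F}(0)(V^{\perp}) = 0$. Once an admissible variation with variational field $V^{\perp}$ is produced, one integrating $V$ itself follows by composing with the flow on $\bar{M}$ of the (compactly supported) pull-back of $V^{\top}$, which changes neither the image nor the degree. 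So the two issues to settle are: (a) perturbing $\exp(\tau V^{\perp})$ into a variation of constant degree $d = \deg(M)$, and (b) making that perturbation trivial outside a compact subset of $\Omega_{\eps}$.

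For (a) I would use the map $\mathcal{G}$ of \eqref{eq:defG} and Proposition~\ref{prop:Gisomor}. Since $\deg(E_1 \wdw E_m) = \iota_0 + (m-1)s = d$ we have $\mathcal{F}(0) = \Pi_d(E_1 \wdw E_m) = 0$, hence $\mathcal{G}(0,0) = (0,0)$; as $D\mathcal{G}(0,0)$ is an isomorphism, $\mathcal{G}$ is a local diffeomorphism at the origin. Solving $\mathcal{G}(\tau V_{\mh}, Y_2) = (\tau V_{\mh}, 0)$ for $|\tau|$ small produces $W(\tau) \in \mathcal{V}^1(\Sigma_0)$ with $W(0) = 0$ and $\mathcal{F}(\tau V_{\mh} + W(\tau)) = 0$. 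Differentiating this identity at $\tau = 0$ gives $D\mathcal{F}(0)(V_{\mh} + W'(0)) = 0$; subtracting $D\mathcal{F}(0)(V^{\perp}) = 0$ shows $W'(0) - V_{\mathcal{V}}$ is a purely vertical admissible field vanishing on $\Sigma_0$, hence $0$ by uniqueness for the linear ODE \eqref{eq:ruledadm2}. Thus $\Psi(\tau) := \tau V_{\mh} + W(\tau)$ has $\Psi(0) = 0$ and $\Psi'(0) = V^{\perp}$, so $\Gamma_{\tau} := \exp(\Psi(\tau))$ has variational vector field $V^{\perp}$; moreover $\mathcal{F}(\Psi(\tau)) = 0$ forces $\deg(\Gamma_{\tau}(\bar{M})) \le d$, with equality for $\tau$ small because the maximal-degree locus is open (\cite[Corollary 2.4]{2019arXiv190505131C}) and $\Gamma_{\tau}$ is $C^1$-close to $\Phi$. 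Because $V_{\mh}$ has compact support and $W(\tau)$ satisfies an ODE in $x_1$ with zero data and zero initial value near $\Sigma_0$ and near $\partial\Sigma_0$, the field $\Psi(\tau)$ vanishes there as well; the sole obstruction to condition (iii) of Definition~\ref{def:admissible} is the boundary discrepancy $\beta(\tau) := W(\tau)|_{\Sigma_{\eps}} \in \mathcal{V}(\Sigma_{\eps})$, and from the above $\beta(0) = 0$, $\beta'(0) = V_{\mathcal{V}}|_{\Sigma_{\eps}} = 0$, so $\|\beta(\tau)\|_{\infty,\Sigma_{\eps}} = o(\tau)$.

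For (b) I would use the regularity of $\Phi$. The idea is to allow an additional horizontal control: for each small $\tau$ one looks for $\gamma(\tau) \in \mathcal{H}_0(\Omega_{\eps})$ with $\gamma(0) = 0$ and $\gamma'(0) = 0$ such that the correction $W_{\gamma}(\tau)$ attached to $\tau V_{\mh} + \gamma(\tau)$ by the argument of (a) vanishes on $\Sigma_{\eps}$ as well as on $\Sigma_0$. The linearization at $\gamma = 0$, $\tau = 0$ of the map $\gamma \mapsto W_{\gamma}(\tau)|_{\Sigma_{\eps}}$ is exactly the holonomy map $H_M^{\eps}$. By hypothesis $\Phi$ is regular, so $H_M^{\eps}$ is not onto but has dense image containing a normalized Schauder basis $(e_n)$ of $\mathcal{V}(\Sigma_{\eps})$; one therefore cannot invert in a single step, and I would instead iterate. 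Fixing preimages $\gamma_n$ with $H_M^{\eps}(\gamma_n) = e_n$, one expands the current discrepancy $\rho = \sum_n e_n^*(\rho) e_n$ (with $|e_n^*(\rho)| \le C\|\rho\|$ by the Schauder-basis estimate), cancels a finite initial segment by adding $\sum_{n \le N} e_n^*(\rho)\gamma_n \in \mathcal{H}_0(\Omega_{\eps})$, which reduces $\|\rho\|$ by a fixed factor up to a quadratic error controlled — via the ODE estimates of Lemmas~\ref{lem:odeestimate} and~\ref{lm:ODE ineq} — by the size of the correction, and sums the corrections; the quadratic smallness $\beta(\tau) = o(\tau)$ is what keeps $\gamma(\tau) = o(\tau)$ and makes the scheme converge in $\mathcal{H}_0(\Omega_{\eps})$. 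With such a $\gamma(\tau)$ the field $\Psi(\tau) := \tau V_{\mh} + \gamma(\tau) + W_{\gamma}(\tau)$ vanishes near all of $\partial\Omega_{\eps}$, extends by zero to $\bar{M}$, and $\exp(\Psi(\tau))$ is an admissible variation of $\Phi$ whose variational vector field is $V^{\perp}$; hence $V$ is integrable.

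The hard part is (b): converting the qualitative regularity of $\Phi$ into an \emph{exact} cancellation of the discrepancy on $\Sigma_{\eps}$. In the one-dimensional case of \cite[Theorem 3]{MR1189496} the target of the holonomy is finite-dimensional, so surjectivity of the linearization yields local surjectivity of the nonlinear map by the ordinary implicit function theorem; here $\mathcal{V}(\Sigma_{\eps})$ is infinite-dimensional and $H_M^{\eps}$ is only densely surjective, so the iteration must be arranged so that the horizontal corrections are summable in $\mathcal{H}_0(\Omega_{\eps})$ — this is where the quantitative ODE bounds (Lemmas~\ref{lem:odeestimate}, \ref{lm:ODE ineq}), the Schauder-basis coefficient estimates, and the decay $\beta(\tau) = o(\tau)$ must be combined.
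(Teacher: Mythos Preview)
Your step (a) matches the paper: the implicit function theorem applied via $\mathcal{G}$ (Proposition~\ref{prop:Gisomor}) produces the vertical correction with $\mathcal{F}=0$, and uniqueness for \eqref{eq:ruledadm2} forces its $\tau$-derivative at $0$ to equal $V_{\mathcal V}$. The divergence from the paper is in (b), and there your iteration has a real gap. The claim that truncating the Schauder expansion at level $N$ ``reduces $\|\rho\|$ by a fixed factor'' fails for a general Schauder basis: the tail projections $\mathrm{Id}-P_N$ converge to $0$ strongly but not in operator norm, so no single $N$ contracts uniformly over all $\rho$. If instead you let $N=N_k$ depend on the current $\rho_k$, the horizontal correction has norm bounded only by $C\|\rho_k\|\sum_{n\le N_k}\|\gamma_n\|$, and nothing in the hypotheses bounds $\|\gamma_n\|$ --- the holonomy map has dense image but is not onto, so the preimages $\gamma_n$ cannot be chosen with uniformly bounded norms. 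The quadratic error is then not dominated by the linear gain, and neither $\rho_k\to 0$ nor summability of the accumulated corrections in $\mathcal H_0(\Omega_\eps)$ follows. The $o(\tau)$ size of the initial discrepancy $\beta(\tau)$ helps only at step one; thereafter $\tau$ is fixed and the obstruction is intrinsic to the interaction between the non-uniform Schauder tails and the unbounded preimages.

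The paper sidesteps iteration entirely. It fixes once and for all admissible fields $V^i$ with $V^i_{\mathcal V}|_{\Sigma_0}=0$ whose traces $\mathbf{T}_{\Sigma_\eps}(V^i_{\mathcal V})$ form the normalized Schauder basis, and parametrizes all corrections simultaneously by a sequence $(\tau_i)\in l^1(\rr)$. A first implicit function theorem, applied to $\tilde{\mathcal G}\big(\tau,(\tau_i),Y_1,Y_2\big)=(Y_1,\mathcal F(\tau V+\sum_i\tau_i V^i+Y_1+Y_2))$ with $D_Y\tilde{\mathcal G}(0)=D\mathcal G(0,0)$ an isomorphism, yields $Y_2(\tau,(\tau_i))$ with $\mathcal F=0$. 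A second implicit function theorem, applied to the boundary map $\mathcal P(\tau,(\tau_i))=\Gamma(\tau V+\sum_i\tau_i V^i+Y_2)|_{\Sigma_\eps}$ using that $D_2\mathcal P(0,0):l^1(\rr)\to\mathcal V(\Sigma_\eps)$, $\alpha\mapsto\sum_i\alpha_i\,\mathbf{T}_{\Sigma_\eps}(V^i_{\mathcal V})$, is a Banach-space isomorphism, gives $\tau_i(\tau)$ with $\tau_i'(0)=0$ in one stroke. The Schauder basis is used not to approximate and iterate, but to manufacture a Banach domain $l^1(\rr)$ on which the linearized boundary map is \emph{genuinely invertible}, so the standard IFT applies and no summability of corrections has to be fought for.
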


\begin{proof}
If $\iota_0=s-1$ all vector fields are admissible, then all immersions are automatically regular. Each vector field $V$ is integrable for instance by the exponential map $\Gamma_t=\exp(tV)$.

Let now $s-3\le \iota_0 \le s-2$. Let us take $V$ vector field on $\Omega_{\eps}$ and $\{V^i\}_{i=1}^{\infty}$ vector fields equi-bounded in the supremum norm on $\bar{\Omega}_{\eps}$. Let $l^1(\rr)$  the Banach space of summable sequences. We consider the map
\[
\tilde{\mathcal{G}}:\big[(-\eps,\eps)\times l^1 (\rr)\big]\times \mathcal{H}(\Sigma_0)\times \mathcal{V}^1(\Sigma_0) \to \mathcal{H}(\Sigma_0)\times {\Lambda}(\Sigma_0),
\]
given by
\[
\tilde{\mathcal{G}}((\tau,(\tau_i),Y_1,Y_2))=(Y_1,\mathcal{F}(\tau V+\sum_{i=1}^{\infty}\tau_iV^i+Y_1+Y_2)),
\]
where $\mathcal{F}$ is defined in \eqref{eq:mathcalF}.
The map $\tilde{\mathcal{G}}$ is continuous with respect to the product norms (on each factor we put the natural norm, the Euclidean one on the interval, the $l^1$ norm and $||\cdot||_{\infty}$ and $||\cdot||_{1}$ in the spaces of vectors on $\Omega$). Moreover
\[
\tilde{\mathcal{G}}(0,0,0,0)=(0,0),
\]
since the immersion $\Phi$ has degree equal to $d$. Denoting by $D_Y$ the differential with respect to the last two variables of $\tilde{\mathcal{G}}$ we have that
\[
D_Y\tilde{\mathcal{G}}(0,0,0,0)(Y_1,Y_2)=D\mathcal{G}(0,0)(Y_1,Y_2)
\]
is a linear isomorphism thanks to Proposition~\ref{prop:Gisomor}. We can apply the Implicit Function Theorem to obtain maps
\[
Y_1:(-\eps,\eps)\times l^1 (\eps)\to \mathcal{H}(\Sigma_0), \quad Y_2:(-\eps,\eps)\times l^1 (\eps)\to \mathcal{V}^1(\Sigma_0),
\]
such that $\tilde{\mathcal{G}}(\tau,(\tau_i),(Y_1)(\tau,\tau_i),(Y_2)(\tau,\tau_i))=(0,0)$. We denote by $l^1 (\eps)$ the ball of radio $\eps$ in Banach space $l^1(\rr)$.  This implies that $(Y_1)(\tau,(\tau_i))=0$ and that
\begin{equation}
\label{eq:mcFIFT}
\mathcal{F}(\tau V+\sum_{i=1}^{\infty} \tau_iV^i+Y_2(\tau,\tau_i))=0.
\end{equation}
Hence the submanifolds
\[
\Gamma(\tau V+\sum_i \tau_i V^i+Y_2(\tau,\tau_i))
\]
have degree equal to or less than $d$.

Now we assume that $V$ is an admissible vector field compactly supported on $\Omega_{\eps}$, and that $V^i$ are admissible vector fields such that $V_{\mathcal{V}}^i$ vanishing on $\Sigma_0$. Then, differentiating \eqref{eq:mcFIFT}, we obtain that the vertical vector fields
\[
\frac{\ptl Y_2}{\ptl \tau}(0,0),\frac{\ptl Y_2}{\ptl \tau_i}(0,0)
\]
on $\Omega_{\eps}$ are admissible. Since they are admissible and vertical vector fields vanishing at $(0,\hat{x})$, they are identically $0$.

Since the image of the holonomy map is dense and contains a  normalized Schauder  basis for $\mathcal{V}(\Sigma_{\eps})$, we choose $\{V^i\}_{i=1}^{\infty}$ on $\Omega_{\eps}$ such that $\{\mathbf{T}_{\Sigma_{\eps}}(V^i_{\mathcal{V}})\}_{i \in \nn}$ is a  normalized Schauder  basis for $\mathcal{V}(\Sigma_{\eps})$. Then we consider the map
\[
\mathcal{P}:(-\eps,\eps)\times l^1 (\eps) \to \mathcal{C}_0(\Sigma_{\eps}, N)
\]
given by
\[
(\tau,(\tau_i))\mapsto \Gamma(\tau V +\sum_{i=1}^{\infty} \tau_i V^i+Y_2(\tau,\tau_i))_{|\Sigma_{\eps}},
\]
where $\mathcal{C}_0(\Sigma_{\eps}, N)$ is the set of continuous functions from $\Sigma_{\eps}$ to $N$ vanishing at infinity, that inherits its differential structure as submanifold of the Banach space  $C_0(\Sigma_{\eps}, \rr^{2n})$, see \cite[Section 5]{PT01}. For $s, (s_i)$ small, the image of this map is an infinite-dimensional submanifold 
of $\mathcal{C}_0(\Sigma_{\eps}, N)$ with tangent space at $\Phi|_{\Sigma_{\eps}}$ given by the Banach space $\mathcal{V}(\Sigma_{\eps})$ (as  $\mathbf{T}_{\Sigma_{\eps}}(V)=0$ and $\mathbf{T}_{\Sigma_{\eps}}(V^i)=\mathbf{T}_{\Sigma_{\eps}} (V_{\mathcal{V}}^i)$
generate $\mathcal{V}(\Sigma_{\eps})$ ).
Notice that 
\[
 \dfrac{\partial \mathcal{P}(0,0)}{\partial \tau_i}=\mathbf{T}_{\Sigma_{\eps}}(V^i)=\mathbf{T}_{\Sigma_{\eps}} (V_{\mathcal{V}}^i),
\]
for each $i \in \nn$.
Therefore the differential  $D_2 \mathcal{P}(0,0): l^1(\rr) \to \mathcal{V}(\Sigma_{\eps})$ defined by 
\[
D_2 \mathcal{P}(0,0)(\alpha)=\sum_{i=1}^{\infty} \alpha_i \mathbf{T}_{\Sigma_{\eps}} (V_{\mathcal{V}}^i)
\]
is injective, surjective and continuous. Then, by \cite[Corollary 2.7]{Brezis} $D_2 \mathcal{P}(0,0)$ is a Banach space isomorphism.
Moreover, we have
\[
 \dfrac{\partial \mathcal{P} (0,0)}{\partial \tau}=\mathbf{T}_{\Sigma_{\eps}}(V)=0,
\]
since $V$ is compactly supported in $\Omega_{\eps}$. Hence we can apply the Implicit Function Theorem to conclude that there exist $\eps'<\eps$ and a family of smooth functions $\tau_i(\tau)$, with $\sum_i |\tau_i(\tau)| < \eps$ for all $\tau \in (-\eps',\eps')$, so that
\[
\Gamma(\tau V+\sum_i \tau_i(\tau) V^i+Y_2(\tau,\tau_i(\tau)))
\]
takes the value $\Phi(\bar{p})$ for  each $\bar{p} \in \Sigma_{\eps}$. Since the vector fields $\{V^i\}_{i=1}^{\infty}$ are equi-bounded in the supremum norm on $\bar{\Omega}_{\eps}$, the series $\sum_i \tau_i(\tau) V^i$ is absolutely convergent on $\bar{\Omega}_{\eps}$. 

Clearly, we have
\[
 \mathcal{P}(\tau,(\tau_i(\tau)))(\bar{p})=\Phi(\bar{p}),
\]
for each  $\bar{p} \in \Sigma_{\eps}$.
Differentiating with respect to $\tau$ at $\tau=0$ we obtain 
\[
\dfrac{\partial \mathcal{P} (0,0)}{\partial \tau} + \sum_i \dfrac{\partial \mathcal{P} (0,0)}{\partial \tau_i} \tau_i'(0)=0.
\]
Therefore $\tau_i'(0)=0$ for each $i\in \nn$. Thus, the variational vector field to $\Gamma$ is 
\begin{equation}
\dfrac{\Gamma(\tau)}{\partial \tau}\bigg|_{\tau=0}=V+\sum_{i} \tau_i'(0) V^i+ \frac{\ptl Y_2}{\ptl \tau}(0,0)+\sum_{i} \frac{\ptl Y_2}{\ptl \tau_i}(0,0)=V.\qedhere
\end{equation}
\end{proof}

Here we show an unexpected application of Theorem~\ref{thm:intcriterion}.

\begin{example}
\label{ex:regularplaneinEngel}
An Engel structure $(E,\mh)$ is $4$-dimensional Carnot manifold where $\mh$ is a two dimensional distribution of step $3$. A representation of the Engel group $\mathbb{E}$, which is the tangent cone to each Engel structure, is given by $\rr^4$ endowed with the distribution $\mh$ generated by 
\[
X_1=\partial_{x_1} \quad \text{and} \quad X_2= \partial_{x_2} + x_ 1\partial_{x_3}+ \dfrac{x_1^2}{2} \partial_{x_4}.
\] 
The second layer is generated by
 $$X_3=[X_1,X_2]= \partial_{x_3}+ x_1 \partial_{x_4}$$
 and the third layer by $X_4=[X_1,X_3]=\partial_{x_4}$. A well-known example of horizontal singular curve, first discovered by Engel, is given by $\ga:\rr \to \rr^4$, $\ga(t)=(0,t,0,0)$. R. Bryant and L. Hsu proved in \cite{MR1240644}  that $\ga$ is rigid in the $C^1$ topology therefore this curve $\ga$ does not satisfy any geodesic equation. However  H. Sussman \cite{Sussman} proved that $\ga$ is the minimizer among all the curves whose endpoints belongs to the $x_2$-axis.
 
Let $\Omega$ be an open set in $\rr^2$ and $\Phi: \Omega \to \rr^4$ be the ruled immersion parametrized by $\Phi(u,v)=(0,u,0,v)$ whose tangent vectors are  $(X_2)_{\Phi(u,v)}$ and $(X_4)_{\Phi(u,v)}$. Then we have that the degree $\deg(\Phi(\Omega))$ is equal to four. Fix the left invariant metric $g$ that makes $X_1,\ldots,X_4$ an orthonormal basis. Taking into account equation \eqref{eq:ruledadm}, we have that  a normal vector field $V=f_3 \, X_3 + g_1 X_1 $ is  admissible if and only if
\[
\dfrac{\partial f_3 }{\partial u}= g_1,
\]
since $b_{313}=\escpr{X_3,[X_2,X_3]}=0$ and $a_{311}=\escpr{X_3,[X_2,X_1]}=-1$. Therefore $A(u,v)=(-1)$ for all $(u, v) \in \Omega$, then $A$ is linearly full in $\rr$. Thus, by Proposition~\ref{prop:linearlyfull} we gain that ruled immersion $\Phi$ is regular.

Despite the immersion $\Phi$ is foliated by singular curves that are also rigid in the $C^1$ topology,  $\Phi$ is a regular ruled immersion. Thus, by Theorem~\ref{thm:intcriterion} we obtain that each admissible vector field is integrable. Therefore it possible to compute the first variation formula \cite[Eq. (8.7), Section 8]{2019arXiv190505131C} and verify that $\Phi$ is a critical point for the area functional with respect to the left invariant metric $g$ since its mean curvature vector $\mathbf{H}_4$ of degree $4$ vanishes. Hence this plane foliated by abnormal geodesics, that do not verify any geodesic equations, satisfies the mean curvature equations for surface of degree $4$.
\end{example}

Here we show some applications  of  Theorem~\ref{thm:intcriterion} to lifted surfaces immersed of codimension 2 in an Engel structure that model the visual cortex, taking into account orientation and curvature.

\begin{example}
\label{ex:visualc}
Let $E=\rr^2  \times  \mathbb{S}^1 \times \rr$ be a smooth manifold with coordinates $p=(x,y,\theta,k)$. We set $\mathcal{H}=\text{span}\{X_1,X_2\}$, where 
\begin{equation}
 X_{1}= \cos( \theta ) \partial_{x} + \sin( \theta )  \partial_{y}+ k\partial_{\theta} \quad \text{and} \quad  X_{2}=\partial_{k}.
 \label{vector fields 1layer}
\end{equation}
The second layer is generated by
 $$X_{3}=[X_{1},X_{2}]=-\partial_{\theta}$$
 and $X_1,X_2$. The third layer by adding $ X_{4}=[X_{1},[X_{1},X_{2}]]=-\sin(\theta)\partial_{x}+ \cos(\theta) \partial_{y}$ to $X_1,\ldots,X_3$. Notice that the Carnot manifold  $(E,\mh)$ is a Engel structure. Let $\Omega$ be an open set of $\rr^2$ endowed with the Lebesgue measure. Then we consider the immersion $\Phi:\Omega \rightarrow E$, $\Phi(x,y)=(x,y,\theta(x,y),\kappa(x,y))$ where we set $\Sigma=\Phi(\Omega)$.The tangent vectors to 
$\Sigma$ are 
\begin{equation}
  \Phi_{x}=(1,0, \theta_{x}, k_{x}), \qquad \Phi_{y}=(0,1,\theta_{y},\kk_{y}).
  \label{vector fields tangent to Sigma}
\end{equation}
Following the computation in \cite[Section 4.3]{2019arXiv190505131C} the 2-vector tangent to $\Sigma$ is given 
\begin{equation}
 \begin{aligned}
 \Phi_{x}\wedge\Phi_{y}=&(\cos(\theta)\kk_{y}-\sin(\theta)\kk_{x})X_{1}\wedge X_{2}-(\cos(\theta)\theta_{y} -\sin(\theta) \theta_{x})X_{1}\wedge X_{3}\\
                       &+X_{1}\wedge X_{4}+(\theta_{x}\kk_{y}-\theta_{y}\kk_{x}-\kk(\cos(\theta)\kk_{y}-\sin(\theta)\kk_{x}))X_{2}\wedge X_{3}\\
                       &+(\sin(\theta)\kk_{y}+\cos(\theta)\kk_{x})X_{2}\wedge X_{4}
\\&+(\kk-X_1(\theta))X_{3}\wedge X_{4}.
\end{aligned}
\label{normalv2}
\end{equation}
Since the curvature is the derivative of orientation we gain that $\kappa(x,y)=X_1(\theta(x,y))$ and therefore the degree of these immersion is always equal to four. Then a tangent basis of $T_p \Sigma$ adapted to \ref{eq:tanflag} is given by
\begin{equation}
\begin{aligned}
 E_1&=\co \Phi_x+ \si \Phi_y= X_1+X_1(\kk)X_2,\\
 E_2&=-\si \Phi_x+\co \Phi_y=X_4-X_4(\theta)X_3+X_4(\kk)X_2.
\end{aligned}
\label{adapted vector tangent to Sigma}
\end{equation}
Therefore $\Sigma$ is a  FGT-$(s-3)$ ruled submanifoldruled manifold foliated by horizontal curves.
Adding $V_3=X_2-X_1(\kk)X_1$ and $V_4=X_3$ we obtain a basis of $TE$. Choosing the metric $g$ that makes $E_1,E_2,V_3,V_4$ an orthonormal basis we gain that 
\begin{align*}
a_{413}&=\escpr{V_4,[E_1,V_3]}=1+X_1(\kappa)^2,\\
b_{414}&= \escpr{V_4,[E_1,V_4]}=X_4(\theta).
\end{align*}
Therefore the admissibility system \eqref{eq:ruledadm} on the chart $\Omega$ is given by 
\[
\bar{X}_1 (f_4)= -\bar{X}_4(\theta) f_4-(1+\bar{X}_1(\theta)^2) g_3,
\] 
where $V^{\perp}=g_3 V_3+ f_4 V_4$ and the projection of the vector field $X_1$ and $X_4$ onto $\Omega$ is given by
\begin{align*}
\bar{X}_1&= \cos(\theta(x,y)) \partial_x+\sin(\theta(x,y)) \partial_y \\ 
\bar{X}_4&= -\sin(\theta(x,y))\partial_{x}+ \cos(\theta(x,y)) \partial_{y}.
\end{align*}
Notice that the matrix $A(x,y)=((1+\bar{X}_1(\theta(x,y))^2))$ never vanishes for all $(x,y) \in \Omega$, then also the matrix $\tilde{A}=DA$ defined in  Proposition~\ref{prop:inthol} never vanishes since $D(x,y)\ne0$ for all $(x,y) \in \Omega$. Therefore by Proposition~\ref{prop:linearlyfull} the surface $\Sigma$ is regular, then by Theorem~\ref{thm:intcriterion} $\Sigma$ is deformable.
\end{example}

\bibliography{degree}

\end{document}